\newtheorem{lemma}{Lemma}
\newtheorem{theorem}[lemma]{Theorem}
\newtheorem*{theorem*}{Theorem}
\newtheorem{definition}[lemma]{Definition}
\newtheorem*{definition*}{Definition}
\newtheorem{proposition}[lemma]{Proposition}
\newtheorem*{proposition*}{Proposition}
\newtheorem{corollary}[lemma]{Corollary}
\newtheorem*{corollary*}{Corollary}
\newtheorem{remark}[lemma]{Remark}
\newtheorem{question}{Question}
\newtheorem{conjecture}{Conjecture}
\newenvironment{customthm}[1]
{\innercustomthm}
{\endinnercustomthm}
\newcommand{\QQ}{\mathbb{Q}}
\newcommand{\RR}{\mathbb{R}}
\newcommand{\ZZ}{\mathbb{Z}}
\newcommand{\NN}{\mathbb{N}}
\newcommand{\id}{\mathds{1}}
\newcommand{\inv}{^{-1}}
\newcommand{\abs}[1]{\left|#1\right|}
\newcommand{\norm}[1]{\left|\left|#1\right|\right|}
\newcommand{\set}[1]{\left\{#1\right\}}
\newcommand{\mf}[1]{\mathfrak{#1}}
\newcommand{\mc}[1]{\mathcal{#1}}
\newcommand{\ggd}{\gcd}
\newcommand{\without}[1]{\backslash\{#1\}} 
\newcommand{\normal}{\vartriangleleft}
\newcommand{\range}[1]{\{1,2,\cdots,#1\}}
\newcommand{\floor}[1]{\left\lfloor #1\right\rfloor}
\newcommand{\gr}{\mathrm{Gr}}
\author{Lukas Vandeputte\footnote{The author  kindly acknowledges the support by the group of science Engineering and Technology at KU Leuven Campus Kulak.}}
\title{Twisted Conjugacy Growth of the Generalised Heisenberg Groups}
\date{\today}
\begin{document}
	\maketitle
	
	\begin{abstract}
		We determine the twisted conjugacy growth function for automorphisms on generalised Heisenberg groups. In particular we demonstrate for these groups that up to a natural equivalence this function is either given by $n^k$ or $n^k\ln(n)$ for some constant $k$. We give a precise description for which of these functions we obtain in terms of dimensions and ranks on the upper and lower central series. In particular we obtain that the twisted conjugacy growth is always bounded by the conjugacy growth.
	\end{abstract}
	\section*{Introduction}
	Let $G$ be a group, finitely generated by some set $S$ that is closed under inverses and that contains the identity element. The \textbf{conjugacy growth function} ${\gr}_{\sim G}^S:\NN_{>0}\rightarrow\NN_{>0}$ is an increasing function of integers where ${\gr}_{\sim G}^S(n)$ counts the number of distinct conjugacy classes of $G$, intersecting the closed ball $S^n$. This function was first introduced by Babenko in \cite{Babenko1988closed} due to its relationship with geodesics in Riemanian manifolds, but has since been studied in its own right. 
	This function can behave quite differently, from the normal growth function (which counts the number of elements in the closed balls $S^n$). Indeed, Osin \cite{Osin2010small} showed that groups with exponential growth can have bounded conjugacy growth. However, in a lot of cases, there is a closer correspondence between the $2$. For instance Breuillard and de Cornulier\cite{MR2777001} demonstrated that for solvable groups, either both the growth and conjugacy growth are exponential, or they are both at most polynomial, in which case the group must be virtually-nilpotent following Gromov's polynomial growth theorem \cite{Gromov1981groups}. Similarly Breuillard, de Cornulier, Lubotzky and Chen \cite{MR3021813} showed that the same result holds in the class of finitely generated linear groups.
	The above suggests that the conjugacy growth of nilpotent groups is of interest. Nevertheless the exact conjugacy growth of nilpotent groups is only known in rare cases such as the \textbf{generalised Heisenberg group}, finitely generated $2$-step nilpotent groups with infinite cyclic derived subgroup, by Evetts\cite{evetts2023conjugacy}.
	
	In the same paper, Evetts introduced the related twisted conjugacy growth function.
	Given $\psi$ an automorphism on $G$, we call $x,y\in G$ \textbf{$\psi$-conjugate} if there exists some $g\in G$ such that $gx\psi(g)\inv=y$, in this case we denote $x\sim_\psi y$. The \textbf{$\psi$-twisted conjugacy classes} are the equivalence classes of this relation and we denote the twisted conjugacy class of $x$ by $[x]_\psi$.
	The \textbf{$\psi$-twisted conjugacy growth function} is then the function ${\gr}_{\psi, G}^{S}:\NN\rightarrow\NN$ that maps $n$ to the number of $\psi$-conjugacy classes with a representative in $S^n$. Note that this function, like the conjugacy growth function, depends on the choice of generating set. This dependence is only slight and can be mitigated if we look only at the asymptotic behaviour.
	
	Originally, the twisted conjugacy growth function was introduced to study the untwisted variant. In particular, if $\gr_{\psi,H}$ is for any automorphism $\psi$ bounded by the same function $f$, then $\gr_{\tilde G}$ is also bounded by $f$ for any finite extension $G$ of $H$. 
	
	In more recent work, the twisted conjugacy growth has been studied on its own. More specifically in \cite{dekimpe2025twisted}, Dekimpe and Lathouwers computed the twisted conjugacy growth functions on finitely generated virtually abelian groups. 
	In this paper we continue this development by generalising the work from Evetts \cite{evetts2023conjugacy} on generalised Heisenberg groups to the setting of twisted conjugacy.
	For these groups we derive the following result:
	\begin{customthm}{A}\label{prop:maintheorem}
		Let $G$ be a generalised Heisenberg group. Let $\psi:G\rightarrow G$ be an automorphism and let $\overline{\psi}$ be the induced morphism of $\psi$ on the abelianization of $G$, then $${\gr}_{\psi, G}^{S}(n)\simeq n^{\dim(G/[G,G])-\mathrm{rank}(\id-\overline\psi)}d(n)$$ where $d$ is given by one of $n^2,n,\ln(n)$ or $1$.
	\end{customthm}
	For more precise conditions on the function $d$ we refer the reader to \cref{prop:completemaintheorem} where we give the expression of $d$ in terms of ranks on and dimensions of quotients of the upper and lower central series.
	This more precise formulation allows us to relate the twisted conjugacy growth that we computed to the conjugacy growth functions as computed in \cite{evetts2023conjugacy}.
	\begin{corollary}
		Let $G$ be a finitely generated group with infinite cyclic derived subgroup, let $\psi$ be any automorphism of $G$, then $$
		\gr_{\psi,G}(n)\prec \gr_{\sim G}(n).
		$$
	\end{corollary}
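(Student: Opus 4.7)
The plan is to apply the refined version of \cref{prop:maintheorem} (namely \cref{prop:completemaintheorem}) on both sides of the desired inequality and then compare the resulting expressions. Setting $\psi = \id$ yields $\overline{\psi} = \id$, so $\mathrm{rank}(\id - \overline{\psi}) = 0$, and one recovers Evetts' conjugacy growth formula in the form
$$\gr_{\sim G}(n)\simeq n^{\dim(G/[G,G])}\,d_{\id}(n)$$
for some $d_{\id}(n)\in\{1,\ln(n),n,n^{2}\}$ dictated by the refined statement. For an arbitrary automorphism $\psi$ of $G$, \cref{prop:maintheorem} gives
$$\gr_{\psi,G}(n)\simeq n^{\dim(G/[G,G])-r}\,d_{\psi}(n),\qquad r:=\mathrm{rank}(\id-\overline{\psi}),$$
with $d_{\psi}(n)\in\{1,\ln(n),n,n^{2}\}$. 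The corollary therefore reduces to the single comparison $d_{\psi}(n)\preceq n^{r}\,d_{\id}(n)$.

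I would then split on the value of $r$. For $r\geq 2$ the bound is immediate, since $d_{\psi}(n)\preceq n^{2}\preceq n^{r}\preceq n^{r}d_{\id}(n)$. For $r=1$ the inequality $d_{\psi}(n)\preceq n\cdot d_{\id}(n)$ holds unless $d_{\psi}(n)\simeq n^{2}$ while $d_{\id}(n)\simeq 1$, and one needs to check via the precise conditions in \cref{prop:completemaintheorem} that this configuration does not occur: the structural ranks and dimensions on the lower/upper central series that force $d_{\psi}$ to attain the maximal value $n^{2}$ also force $d_{\id}$ to be at least $n$. The remaining case $r=0$ is where $\overline{\psi}=\id$; here one must verify directly that $d_{\psi}(n)\preceq d_{\id}(n)$.

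The main obstacle is precisely this last case. Even when $\psi$ is trivial on the abelianization, it may still act non-trivially on $[G,G]\cong\ZZ$; since $\mathrm{Aut}(\ZZ)=\{\pm 1\}$, there are only two sub-cases to handle. I would go into \cref{prop:completemaintheorem} and read off the conditions determining $d_{\psi}$ in each of these sub-cases, comparing them term by term with the conditions for $d_{\id}$. The key conceptual point is that twisting by $\psi$ can only merge conjugacy classes with their $\psi$-translates, never split them, so the contribution of the centre to $d_{\psi}$ cannot exceed that to $d_{\id}$; translating this qualitative fact into the rank-and-dimension bookkeeping of \cref{prop:completemaintheorem} completes the proof.
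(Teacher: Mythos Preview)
Your reduction to the single inequality $d_\psi(n)\prec n^{r_c}\,d_{\id}(n)$ is exactly the paper's reduction. The paper, however, dispatches this comparison in one line rather than by a case split on $r_c$: from the definition $\mf d_\psi=(d_c-r_c)-(d_{z/c}-r_{z/c})$ one reads off
\[
\mf d_\psi=\mf d_{\id}-r_c+r_{z/c}\ \geq\ \mf d_{\id}-r_c,
\]
and since the function $k\mapsto d_k$ is decreasing (up to $\prec$) with $d_{k-1}\prec n\cdot d_k$ for every $k$, this immediately gives $d_{\mf d_\psi}\prec n^{r_c}\,d_{\mf d_{\id}}$ in the degenerate case; the non-degenerate case is trivial because then $d_\psi=1$. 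This single rank inequality replaces your separate treatments of $r=0,1,\geq 2$ and the ad hoc exclusion of the configuration $(d_\psi,d_{\id})=(n^2,1)$.

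One small correction to your $r=0$ sub-case analysis: if $\psi_c=\id$ on $G/G_2$ then the compatibility $\psi\vert_{G_2}(\omega(g,h))=\omega(\psi_c g,\psi_c h)=\omega(g,h)$ together with $\omega\neq 0$ forces $\psi\vert_{G_2}=+\id$. So there is no ``$-1$ on $[G,G]$'' sub-case to worry about; either $\psi'$ is non-trivial on $Z(G)/G_2$ (non-degenerate, $d_\psi=1$) or it is trivial, in which case $\mf d_\psi=\mf d_{\id}$ and $d_\psi=d_{\id}$ on the nose.
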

	
	Notice that by Dekimpe and Lathouwers\cite{dekimpe2025twisted} this is also the case in the class of virtually abelian groups. This leads to the following question:
	\begin{question}
		What finitely generated groups $G$ have the property that for any automorphism $\psi$ on $G$, $$
		{\gr}_{\psi,G}^S\prec {\gr}_{\sim G}^S?
		$$
	\end{question}
	Notice that if $G$ is such a group, then $G$ and any finite extension $H$ of $G$ have the same conjugacy growth function.
	Hull and Osin clearly provide a non-example to this in \cite[Theorem 1.3]{MR3010062}.
	On the other hand, the earlier mentioned results from \cite{MR2777001} and \cite{MR3021813}, imply that the above holds for all solvable and linear groups that are not virtually nilpotent.

	Lastly, as a related concept to the twisted conjugacy growth function, we also consider the \textbf{twisted conjugacy growth series}. This is the formal power series 
	$\sum {\gr}_{\psi,G}^S(n)x^n $
	where the coefficients are given by the values of the twisted conjugacy growth function.
	In \cite{MR4102998} Ciobanu, Evetts and Ho conjectured that the conjugacy growth series of a finitely presented group is rational if and only if it is virtually abelian. As a closely related conjecture we have the following:
	\begin{conjecture}\label{conj:twistedtrancendental}
		A finitely presented group $G$ is virtually abelian, if and only if all its twisted conjugacy growth functions are rational.
	\end{conjecture}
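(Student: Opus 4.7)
The plan is to split the biconditional and address its two directions with very different tools. For the forward direction, that virtual abelianness implies rationality of every twisted conjugacy growth series, the natural starting point is the computation of Dekimpe and Lathouwers \cite{dekimpe2025twisted}, which gives $\gr_{\psi,G}^S(n)$ in closed form for any automorphism $\psi$ of a finitely generated virtually abelian group $G$. I would try to decompose the count of twisted conjugacy classes intersecting $S^n$ along cosets of a finite-index free abelian subgroup $A\leq G$, reduce the problem on each coset to a counting question about affine lattice orbits on $A$, and express each contribution as a polynomial-exponential function of $n$. A finite sum of such contributions then yields a rational series. The technical care needed here is in showing that the minimal-length representative of each twisted class inside $S^n$ can be tracked uniformly, which is precisely the kind of input the Dekimpe--Lathouwers framework should provide.

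For the reverse direction, the key observation is that $\id_G$ is itself an automorphism and $x\sim_{\id_G} y$ is just ordinary conjugacy, so $\gr_{\id,G}^S$ coincides with $\gr_{\sim G}^S$. Rationality of every twisted conjugacy growth series of $G$ therefore implies rationality of the ordinary conjugacy growth series. Hence this direction of \cref{conj:twistedtrancendental} is strictly stronger than the Ciobanu--Evetts--Ho conjecture of \cite{MR4102998}, and any proof of it must in particular resolve that well-known open problem. This is the principal obstacle: no currently available technique can read virtual abelianness off the rationality of a single growth series, so the reverse direction seems out of reach with present methods.

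While a complete proof of the reverse direction appears unattainable for now, one can aim for supporting evidence in concrete non-virtually-abelian families. The generalised Heisenberg groups considered in this paper are $2$-step nilpotent with infinite cyclic derived subgroup, hence never virtually abelian, so \cref{conj:twistedtrancendental} predicts that their twisted conjugacy growth series are always transcendental. \cref{prop:maintheorem} gives $\gr_{\psi,G}^S(n)\simeq n^k\ln(n)$ in a range of cases, and a natural next step is to refine this asymptotic equivalence into an exact description of $\gr_{\psi,G}^S(n)$. A genuine $\ln(n)$ factor is incompatible with the linear recurrence with constant coefficients that rationality forces on the coefficients of a power series, so such a refinement would verify \cref{conj:twistedtrancendental} for these groups and provide a concrete test case beyond the virtually abelian setting.
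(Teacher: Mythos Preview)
The statement you are addressing is a \emph{conjecture}; the paper does not prove it and does not claim to. What the paper does is state the conjecture, remark (citing \cite{evetts2024twisted}) that it appears to be a weakening of the Ciobanu--Evetts--Ho conjecture, and then supply supporting evidence via the corollary that every generalised Heisenberg group admits \emph{some} automorphism with transcendental twisted conjugacy growth series. There is therefore no proof in the paper to compare your proposal against, and your own text rightly concedes that a full argument is currently out of reach.

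That said, your analysis contains a genuine logical error. You argue that since $\gr_{\id,G}^S=\gr_{\sim G}^S$, the implication ``all twisted series rational $\Rightarrow$ $G$ virtually abelian'' is \emph{stronger} than the Ciobanu--Evetts--Ho implication ``ordinary conjugacy series rational $\Rightarrow$ $G$ virtually abelian''. This is backwards. The hypothesis ``all twisted series rational'' is stronger than ``the ordinary conjugacy series is rational'', so the implication with the stronger hypothesis is the \emph{weaker} statement: if CEH holds, then from ``all twisted series rational'' one deduces in particular that the ordinary series is rational, and CEH then gives virtual abelianness. This is exactly why the paper says the twisted conjecture ``is in fact a weakening of the former''. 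Your claim that proving this direction would ``in particular resolve that well-known open problem'' is therefore incorrect.

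There is a second, smaller misreading. For a non-virtually-abelian group the conjecture predicts only that \emph{not all} twisted conjugacy growth series are rational, i.e.\ that at least one fails to be rational; it does not predict that they are ``always transcendental''. The paper's corollary matches this: it exhibits, for each generalised Heisenberg group, \emph{one} automorphism whose series is transcendental. Your suggestion to promote the $n^k\ln(n)$ asymptotic to transcendence of the series is exactly the route the paper takes (via \cite[Theorem~2.16]{evetts2023conjugacy} and \cref{prop:nonPolynomial}), but the conclusion is existence of a bad automorphism, not irrationality for every automorphism.
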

	
	Recent work of Evetts and Lathouwers\cite{evetts2024twisted} suggests that the latter conjecture is in fact a weakening of the former. As a consequence of our \cref{prop:completemaintheorem} we obtain the following:
	\begin{corollary}
		Let $G$ be a generalised Heisenberg group, then there exists an automorphism $\psi$, such that the twisted conjugacy growth series with respect to that automorphism is transcendental for any generating set.
	\end{corollary}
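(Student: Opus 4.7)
The plan is to combine \cref{prop:completemaintheorem} with a classical transcendence criterion for power series: an algebraic series over $\QQ(x)$ with polynomially growing coefficients cannot have a $\ln n$ factor in its coefficient asymptotics.

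\textbf{Step 1.} Using the explicit formula for $d$ in \cref{prop:completemaintheorem}, phrased in terms of ranks and dimensions of quotients of the upper and lower central series, I would exhibit, for any generalised Heisenberg group $G$, an automorphism $\psi$ falling into the $d(n)=\ln(n)$ case. By \cref{prop:maintheorem} this immediately yields
$$\gr_{\psi,G}^{S}(n)\simeq n^{k}\ln(n)$$
for $k=\dim(G/[G,G])-\mathrm{rank}(\id-\overline{\psi})$, independently of the generating set $S$.

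\textbf{Step 2.} I would then invoke the classical result (going back to Jungen and extended by Flajolet) that the coefficients of an algebraic power series $F(x)=\sum a_n x^n$ over $\QQ(x)$ with radius of convergence $\rho>0$ admit an asymptotic expansion of the form $a_n\sim\sum_j c_j \rho^{-n}n^{\alpha_j}$ with rational exponents $\alpha_j$, and in particular with no $\ln n$ factor. Polynomial growth of $a_n$ forces $\rho=1$, so the dominant term reduces to $a_n\sim c\, n^{\alpha}$ for some $\alpha\in\QQ$, whence $\gr_{\psi,G}^{S}(n)\simeq n^{\alpha}$. A direct check on the $\simeq$-equivalence shows that $n^{k}\ln(n)\not\simeq n^{\alpha}$ for any real $\alpha$ (for $\alpha\le k$ the right-hand side is dominated, while for $\alpha>k$ it dominates), contradicting Step~1. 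Hence the series is transcendental; as $\simeq$ is insensitive to the choice of $S$, this conclusion holds for every generating set.

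The principal obstacle is Step~1: producing such a $\psi$ uniformly in $G$. This amounts to a rank/dimension check reflecting a combinatorial condition on the central series of $G$, and I expect it to reduce to selecting $\overline{\psi}$ on the abelianisation so that $\mathrm{rank}(\id-\overline\psi)$ assumes the precise value driving the $\ln$-branch of \cref{prop:completemaintheorem}. The transcendence step itself is then routine, requiring only mild care in passing between the sharp asymptotics delivered by Jungen--Flajolet and the coarse equivalence $\simeq$ used throughout the paper.
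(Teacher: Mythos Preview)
Your overall plan matches the paper's exactly: first exhibit an automorphism $\psi$ with $\gr_{\psi,G}(n)\simeq n^{k}\ln(n)$ (this is the content of \cref{prop:nonPolynomial}), then feed this into a transcendence criterion (the paper cites \cite[Theorem~2.16]{evetts2023conjugacy}, which packages the Jungen--Flajolet type argument you sketch in Step~2).

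Where you fall short is Step~1, which you yourself flag as the obstacle but then underestimate. Landing in the $\ln$-branch of \cref{prop:completemaintheorem} is not merely a matter of adjusting $\mathrm{rank}(\id-\overline\psi)$: one must simultaneously ensure that $\psi$ is in the degenerate case (so $\psi\vert_{G_2}=\id$ and $\psi'$ vanishes on $\ker(\psi_{z/c}-\id)$) and that $\mathfrak d_\psi=2$. Moreover, $\overline\psi=\psi_c$ is not freely choosable on the abelianisation---it must preserve the skew form $\omega$, a genuine symplectic constraint. The paper's construction in \cref{prop:nonPolynomial} handles this by treating $d_z=2$ separately (where $\psi=\id$ already works by \cite{evetts2023conjugacy}) and, for $d_z\ge 4$, building $\psi_z$ out of explicit $2\times 2$ symplectic blocks on a symplectic basis of $G/Z(G)\otimes\QQ$, rescaling so the map preserves the integral lattice, extending trivially over $Z(G)/G_2$, and finally lifting to $G$ with $\psi'$ chosen to keep the image inside $G$ while remaining degenerate. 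None of this is indicated in your outline, so as written Step~1 is a statement of intent rather than an argument.
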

	The above result gives evidence that \cref{conj:twistedtrancendental} holds, at least for the class of nilpotent groups.
	
	We end the introduction by giving a rundown of the remainder of this paper. In \cref{sec:prelim} we start by introducing some necessary definitions together with some basic results on twisted conjugacy growth.
	In \cref{sec:description} we build up to the complete statement of our main result, as well as its consequences. A first case of the main theorem is proven in \cref{sec:nonDegen}. In \cref{sec:classes} we describe the twisted conjugacy classes in the other case. After \cref{sec:numbertheory}, which mainly estimates some power series, we complete the proof of the second case in \cref{sec:degen}.

	\section{Preliminaries}\label{sec:prelim}\label{sec:twisted}
	\setcounter{lemma}{0}
	\numberwithin{lemma}{section}
	To define the twisted conjugacy growth, 
	first we introduce the word norm on a group.
	\begin{definition}
		Let $S$ be a finite generating set of the group $G$, for $g\in G$ define then the \textbf{word norm} 
		$$\norm{g}_S=\begin{cases}\begin{matrix}
				0 &\text{if } g=e_G\\
				\min\set{n\in\NN\mid g=s_1s_2\cdots s_n\text{ for some } s_i\in S\cup S\inv} &\text{otherwise}.
			\end{matrix}
		\end{cases}$$
	\end{definition}
	The word norm depends on the choice of generating set, but only by at most a constant.
	
	\begin{lemma}
		\label{prop:normAreEquiv}
		Let $G$ be a finitely generated group and let $S$ and $T$ be two finite generating subsets of $G$. Then there exists a constant $C$ such that for every group element $g$ the following holds: $$\norm{g}_T\leq C\norm{g}_S. $$
	\end{lemma}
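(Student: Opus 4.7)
The plan is entirely elementary: express each element of $S$ as a word in $T\cup T\inv$ and take the maximum length as the constant $C$.

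More precisely, I would first set
$$
C=\max_{s\in S}\norm{s}_T,
$$
which is a well-defined non-negative integer because $S$ is finite. Note that for any group element $h$ one has $\norm{h\inv}_T=\norm{h}_T$, since reversing and inverting the letters of any $T$-word for $h$ yields a $T$-word of the same length for $h\inv$. In particular the same bound $C$ also controls $\norm{s\inv}_T$ for every $s\in S$, so $\norm{u}_T\leq C$ for every $u\in S\cup S\inv$.

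Next, given an arbitrary $g\in G$, write $n=\norm{g}_S$ and pick a minimal expression $g=s_1s_2\cdots s_n$ with $s_i\in S\cup S\inv$ (the case $g=e_G$ is immediate since both norms vanish). By the previous paragraph each $s_i$ admits a word in $T\cup T\inv$ of length at most $C$. Concatenating these $n$ words gives a word of length at most $Cn$ in $T\cup T\inv$ representing $g$, hence
$$
\norm{g}_T\leq Cn=C\norm{g}_S.
$$
Since $g$ was arbitrary this completes the proof.

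There is no real obstacle here: the only tiny subtlety is handling the inverses $S\inv$, which is dispatched by the observation that word norms are invariant under inversion. The statement is purely preparatory and the proof is the standard change-of-generating-set argument from geometric group theory.
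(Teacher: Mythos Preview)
Your proof is correct and is exactly the standard change-of-generating-set argument; the paper itself does not give a proof but simply refers the reader to \cite{loh2017geometric}, where precisely this argument appears.
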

	For a proof, see for instance \cite{loh2017geometric}.
	
	Word norms are relatively well behaved with respect to quotients.
	\begin{lemma}\label{prop:normPreservingLiftExt:lemma}
		Let $\pi:G\rightarrow H$ be a surjective morphism of groups.
		Let $S$ be a generating set of $G$ and let $\pi(S)=\set{\pi(s)\mid s\in S}$. For any $g\in G$, $\norm{g}_S\geq\norm{\pi(g)}_{\pi(S)}$ and furthermore, there exists some $g'$ such that $\pi(g')=\pi(g)$ and such that $\norm{g'}_S=\norm{\pi(g)}_{\pi(S)}$.
	\end{lemma}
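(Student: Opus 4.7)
The plan is to prove the two claims by directly pushing forward and then lifting geodesic expressions.

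For the inequality $\|g\|_S \geq \|\pi(g)\|_{\pi(S)}$, I would write $g$ as a geodesic word $g = s_1 s_2 \cdots s_n$ with $s_i \in S \cup S^{-1}$ and $n = \|g\|_S$. Since $\pi$ is a group homomorphism, $\pi(s^{-1}) = \pi(s)^{-1}$, so each $\pi(s_i)$ lies in $\pi(S) \cup \pi(S)^{-1}$. Applying $\pi$ gives $\pi(g) = \pi(s_1)\pi(s_2)\cdots \pi(s_n)$, a word of length $n$ over $\pi(S) \cup \pi(S)^{-1}$, and by definition of the word norm we conclude $\|\pi(g)\|_{\pi(S)} \leq n = \|g\|_S$. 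A small edge case is when $g = e_G$: then $\pi(g) = e_H$ and both norms are $0$, so the inequality trivially holds.

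For the existence of $g'$, I would take a geodesic word for $\pi(g)$ in $\pi(S) \cup \pi(S)^{-1}$, say $\pi(g) = t_1 t_2 \cdots t_m$ with $m = \|\pi(g)\|_{\pi(S)}$. By definition of $\pi(S)$ and the homomorphism property, each $t_j$ can be written as $\pi(s_j)^{\varepsilon_j}$ for some $s_j \in S$ and $\varepsilon_j \in \{\pm 1\}$; I simply pick one such choice for each letter. Set $g' := s_1^{\varepsilon_1} s_2^{\varepsilon_2} \cdots s_m^{\varepsilon_m} \in G$. Then $\pi(g') = t_1 \cdots t_m = \pi(g)$ as required, and this expression shows $\|g'\|_S \leq m$. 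Combining with the first part applied to $g'$ gives
$$\|\pi(g)\|_{\pi(S)} = m \geq \|g'\|_S \geq \|\pi(g')\|_{\pi(S)} = \|\pi(g)\|_{\pi(S)},$$
so all inequalities are equalities, yielding $\|g'\|_S = \|\pi(g)\|_{\pi(S)}$.

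There is no real obstacle here; the only point to be attentive to is that the minimal word for $\pi(g)$ might use letters of $\pi(S)$ that have several preimages in $S$, but any pointwise choice of preimages suffices for the argument.
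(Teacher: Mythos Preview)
Your proof is correct and is precisely the standard unpacking of what the paper dismisses as ``immediate.'' The approach is the same: push forward a geodesic to get the inequality, then lift a geodesic letter by letter to produce the norm-preserving preimage.
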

	\begin{proof}
		This is immediate.
	\end{proof}

	We reformulate our previous definition of the twisted conjugacy growth in terms of the word norm.
	\begin{definition}
		Let $G$ be a group, finitely generated by $S$ and let $\psi:G\rightarrow G$ be an automorphism, then we define the \textbf{twisted conjugacy growth function} as $$
		{\gr}_{\psi,G}^S(n)=\#{\set{[g]_\psi\big\vert g\in G, \norm g_S\leq n}}.
		$$
	\end{definition}
	Notice that the above definition depends on the generating set $S$. To mitigate this we introduce the following equivalence of functions.
	\begin{definition}
		Let $f,g:\NN\rightarrow\RR_{>0}$ be non-decreasing functions. Then we say $f\prec g$ if there exists some natural constant $c$ such that $f(n)\leq cg(cn)$ for all $n\in \NN_0$. Furthermore if $f\prec g$ and $g\prec f$, then we say $f\simeq g$
	\end{definition}
	\begin{remark}
		Throughout the paper, we will often compare functions which also take negative values such as $\ln(n)$, If $f,g:\NN\rightarrow\RR$ are non-decreasing functions, then we denote $f(n)\prec g(n)$ if $\max(1,f(n))\prec\max(1,g(n)).$
	\end{remark}
	For $S$ and $S'$ two generating sets of the same group $G$, and for some automorphism $\psi$, we now have ${\gr}_{\psi,G}^S\simeq {\gr}_{\psi,G}^{S'}$. This allows us to unambiguously speak of ${\gr}_{\psi,G}$ without having to specify the generating set.
	
	In general, the twisted conjugacy growth function need not be well behaved under morphisms.
	When the morphism is surjective however we have the following.

	\begin{proposition}\label{prop:finiteCenterReduction}
		Let $G$ be a group and let $N\normal G$ be a normal subgroup and let $\psi:G\rightarrow G$ be an automorphism such that $\psi(N)=N$. Denote with $\overline\psi$ the induced automorphism of $\psi$ on $\frac{G}{N}$, then ${\gr}_{\psi,G}\succ {\gr}_{\overline\psi,\frac{G}{N}}$. Suppose furthermore that there exists some constant $C$ such that for any $g\in G$ the set $\set{[ng]_\psi\mid n\in N}$ is finite of size at most $C$, then $$
		{\gr}_{\psi,G}\simeq {\gr}_{\overline\psi,\frac{G}{N}}.
		$$
	\end{proposition}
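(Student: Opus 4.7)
The plan is to study the map on twisted conjugacy classes induced by the projection $\pi:G\rightarrow G/N$ and control its fibres in both directions. First I would observe that $\pi(S)$ generates $G/N$, and that since $\psi(N)=N$ the automorphism $\overline\psi$ on $G/N$ is well-defined and satisfies $\pi\circ\psi=\overline\psi\circ\pi$. From the relation $y=gx\psi(g)\inv$ one obtains $\pi(y)=\pi(g)\pi(x)\overline\psi(\pi(g))\inv$, so $\pi$ descends to a well-defined map on twisted conjugacy classes.

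For the direction $\gr_{\psi,G}\succ \gr_{\overline\psi,G/N}$, I would actually establish the stronger pointwise inequality $\gr_{\overline\psi,G/N}(n)\leq \gr_{\psi,G}(n)$. Given $\overline g_1,\ldots,\overline g_k\in\pi(S)^n$ representing distinct $\overline\psi$-classes, \cref{prop:normPreservingLiftExt:lemma} provides lifts $g_i\in G$ with $\norm{g_i}_S=\norm{\overline g_i}_{\pi(S)}\leq n$. By the well-definedness of the map on classes, the $g_i$ represent distinct $\psi$-classes in the ball $S^n$, proving the inequality.

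For the opposite direction under the finiteness hypothesis, I would bound the fibres of the map on classes. Suppose $\pi(g_1)\sim_{\overline\psi}\pi(g_2)$, realised by some $h\in G$ with $\pi\left(hg_1\psi(h)\inv\right)=\pi(g_2)$. Then $hg_1\psi(h)\inv=ng_2$ for some $n\in N$, so $[g_1]_\psi=[ng_2]_\psi\in\set{[n'g_2]_\psi\mid n'\in N}$. By hypothesis, this latter set has cardinality at most $C$, so every $\overline\psi$-class has at most $C$ preimages under the map on classes. Since $\pi(S^n)\subseteq\pi(S)^n$, every $\psi$-class with representative in $S^n$ projects to a $\overline\psi$-class with representative in $\pi(S)^n$, which yields $\gr_{\psi,G}(n)\leq C\cdot\gr_{\overline\psi,G/N}(n)$ and thus $\gr_{\psi,G}\prec\gr_{\overline\psi,G/N}$.

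No step is genuinely hard, but the subtlest point is recognising that the hypothesis is precisely what is needed to uniformly bound the fibres of the induced map on classes; the algebraic identity $hg_1\psi(h)\inv=ng_2$ makes this transparent, reducing the argument to counting translates of $g_2$ by $N$ up to $\psi$-conjugacy.
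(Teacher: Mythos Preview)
Your proposal is correct and follows essentially the same approach as the paper: both arguments use \cref{prop:normPreservingLiftExt:lemma} to lift representatives with the same norm for the $\succ$ direction, and both bound the fibres of the induced map on twisted conjugacy classes by $C$ for the $\prec$ direction. Your explicit algebraic identity $hg_1\psi(h)\inv=ng_2$ makes the fibre bound slightly more transparent than the paper's terser justification, but the underlying idea is identical.
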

	\begin{proof}
		Notice that the above statement does not depend on the choice of generating set. Fix $S$ a generating set of $G$, and let $\pi(S)=\set{\pi(s)\mid s\in S}$.
		First we will demonstrate that ${\gr}_{\overline\psi,G/N}^{\pi(S)}(n)\leq {\gr}_{\psi,G}^{S}(n)$. Let $[g]$ be a $\overline\psi$-conjugacy class, such that $\norm{g}_{\pi(S)}\leq n$ for some $g\in [g]$. By Lemma \ref{prop:normPreservingLiftExt:lemma} there exists some element $g'\in G$, such that $\pi(g')=g$ and such that $\norm{g'}_S\leq n$.
		By the axiom of choice we thus have a function $\chi$, mapping every twisted conjugacy class of $G/N$ with a representative $g$ of norm at most $n$, to a twisted conjugacy class of $G$ with a representative $\chi(g)=g'$ of norm at most $n$. This map is injective, indeed, a left inverse of this map is given by $\pi$. We thus have that ${\gr}_{\overline\psi,G/N}^{\pi(S)}(n)\leq {\gr}_{\psi,G}^{S}(n)$.
		
		For the second part, the map $\pi$ induces a set map $\tilde\pi$ from the twisted conjugacy classes of $G$ to those of $G/N$. As $[\pi(a)]_{\overline\psi}=\pi([a]_{\psi})$, this map is at most $C$-to-$1$.
		Notice also that by \cref{prop:normPreservingLiftExt:lemma}, $\pi$ does not increase the word norm. In particular, $\tilde\pi$ maps twisted conjugacy classes with a representative $g$ such that $\norm{g}_S\leq n$, to a twisted conjugacy class with a representative $\pi(g)$ such that $\norm{\pi(g)}_S\leq n$. It follows that there are at most $C$ times as many twisted conjugacy classes in $G$, with a representative of norm at most $n$, as there are twisted conjugacy classes in $G/N$ with a representative of norm at most $n$.
		
		By the above two arguments it follows that $${\gr}_{\psi,G}^{S}(n)\geq {\gr}_{\overline\psi,G/N}^{\pi(S)}(n)\geq \frac{1}{C}{\gr}_{\psi,G}^{S}(n).$$
	\end{proof}
	In the special case where $N$ above is finite, we then obtain the following result.
	\begin{corollary}
		If $G$ is as a group finite-by-$H$, and $\psi$ is an automorphism inducing the automorphism $\overline\psi$ on $H$, then $$
		{\gr}_{\psi,G}(n)\simeq {\gr}_{\overline\psi,H}.
		$$
	\end{corollary}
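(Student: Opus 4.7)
The plan is to apply the preceding \cref{prop:finiteCenterReduction} directly, with $N$ playing the role of the finite kernel witnessing that $G$ is finite-by-$H$. First I would unpack the hypothesis: "finite-by-$H$" means there is a finite normal subgroup $N\normal G$ with $G/N\iso H$, and the automorphism $\overline\psi$ on $H$ is induced by $\psi$ via this identification.

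Next I would verify the two hypotheses of \cref{prop:finiteCenterReduction}. For the condition $\psi(N)=N$: since $\psi$ descends to a well-defined automorphism on $G/N$, the subgroup $N$ must be $\psi$-invariant, and applying the same argument to $\psi\inv$ gives the reverse inclusion, so $\psi(N)=N$. For the uniform bound on the fibres $\set{[ng]_\psi\mid n\in N}$: this set is the image of the finite set $N$ under the map $n\mapsto [ng]_\psi$, so it has cardinality at most $\abs{N}$. Hence we may take $C=\abs{N}$ as the uniform bound required by the proposition.

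With both hypotheses in place, \cref{prop:finiteCenterReduction} yields ${\gr}_{\psi,G}\simeq {\gr}_{\overline\psi,G/N}$, and the identification $G/N\iso H$ transports $\overline\psi$ to the given automorphism on $H$, giving ${\gr}_{\overline\psi,G/N}={\gr}_{\overline\psi,H}$ (up to the generating-set-independence packaged into $\simeq$). Combining these gives the claim. There is no real obstacle here, since the work has already been done in \cref{prop:finiteCenterReduction}; the only thing to be careful about is to justify the $\psi$-invariance of $N$ from the statement that $\psi$ induces $\overline\psi$, and to observe that finiteness of $N$ makes the uniform bound automatic.
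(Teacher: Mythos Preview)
Your proposal is correct and is exactly the argument the paper intends: the paper states this corollary immediately after \cref{prop:finiteCenterReduction} with the remark that it is ``the special case where $N$ above is finite,'' and your verification (taking $C=\abs{N}$ and noting $\psi(N)=N$) fills in precisely the details that justification requires.
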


	To end this subsection, we give the twisted conjugacy growth function for the torsion free finitely generated abelian groups.
	\begin{theorem}[\cite{dekimpe2025twisted}]\label{prop:freeAbelian}
		Let $A$ be a finitely generated free abelian group of dimension $d$ and let $\psi$ be an endomorphism of $A$. Then $$\gr_{\psi,G}(n)\simeq n^{d-\mathrm{rank}(\id-\psi)}. $$
	\end{theorem}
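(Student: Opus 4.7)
The plan is to reduce the computation to counting elements in a quotient of $A$. Since $A$ is abelian, for any $g, x \in A$ we have $gx\psi(g)^{-1} = x + (\id - \psi)(g)$, so $x \sim_\psi y$ if and only if $y - x$ lies in the subgroup $H := \mathrm{Im}(\id - \psi)\leq A$. In other words, the $\psi$-twisted conjugacy classes of $A$ are exactly the cosets of $H$ in $A$.

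Consequently, $\gr_{\psi, A}^S(n)$ equals the number of cosets of $H$ intersecting $\set{g \in A : \norm{g}_S \leq n}$, which is $|\pi(\set{g \in A : \norm{g}_S \leq n})|$ under the projection $\pi: A \to A/H$. By \cref{prop:normPreservingLiftExt:lemma}, this image coincides with the ball of radius $n$ in $A/H$ with respect to the induced generating set $\pi(S)$, so it suffices to compute the word growth of $A/H$.

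The quotient $A/H$ is a finitely generated abelian group whose torsion-free rank equals $d - \mathrm{rank}(\id - \psi)$ by additivity of rank in short exact sequences of finitely generated abelian groups. The classical fact that any finitely generated abelian group of rank $k$ has word growth $\simeq n^k$ then yields $\gr_{\psi, A}(n) \simeq n^{d - \mathrm{rank}(\id - \psi)}$.

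The only subtlety is that $A/H$ may carry nontrivial torsion, so one must be careful not to conflate rank with dimension; this is harmless since torsion contributes only a bounded multiplicative factor to the size of balls. Alternatively, a cleaner structural argument is to note that the induced automorphism $\overline\psi$ on $A/H$ is the identity (because $\psi(a) - a \in H$ by construction), so one may apply \cref{prop:finiteCenterReduction} once more with $N$ the torsion subgroup of $A/H$, reducing immediately to counting elements in a ball of $\ZZ^{d - \mathrm{rank}(\id - \psi)}$ where the estimate is elementary.
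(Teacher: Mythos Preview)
Your argument is correct. The key observation that in an abelian group the $\psi$-twisted conjugacy classes are precisely the cosets of $H=\mathrm{Im}(\id-\psi)$, so that $\gr_{\psi,A}$ coincides with the word growth of $A/H$, is exactly right; the rank computation and the handling of torsion are also fine.

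Note, however, that the paper does not give its own proof of this statement: it simply cites \cite{dekimpe2025twisted} and moves on. So there is no in-paper argument to compare against. Your proof is a clean, self-contained justification of a result the paper treats as known; it relies only on \cref{prop:normPreservingLiftExt:lemma} and the standard polynomial growth of finitely generated abelian groups, which is entirely in keeping with the level of the surrounding text.
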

	\section{Main result}\label{sec:description}
	\subsection{Describing the groups}\label{sec:BCH}
	Let $G$ be a group and let $g,h\in G$, then we denote the group commutator $[g,h]=ghg\inv h\inv$.
	Let the \textbf{derived subgroup} $[G,G]$ be the group generated by $\{[g,h]\mid g,h\in G\}$. Let the \textbf{centre} be the subgroup $Z(G)=\{g\in G\mid \forall h\in G:[g,h]=e\}$. We call a group \textbf{$2$-step nilpotent} if $[G,G]\subset Z(G)$. We denote $G_2$ the isolator $\sqrt{[G,G]}$. 
	
	If furthermore $G_2=\ZZ$, then we call $G$ a \textbf{generalised Heisenberg group}.
	
	If $G$ is torsion free $2$-step nilpotent, then $Z(G),[G,G],G_2,\frac{G}{G_2}$ and $\frac{G}{Z(G)}$ are all torsion free abelian groups. In this case we define the \textbf{Hirsch length} $h(G)=\dim(G_2)+\dim(\frac{G}{G_2}).$
	
	Let $(\mf g,+,[\_,\_])$ be a lie-algebra. We say $\mf g$ is \textbf{$2$-step nilpotent} if $[\mf g,[\mf g,\mf g]]=0$. There is a close connection between torsion free nilpotent groups and nilpotent lie-algebras. For simplicity, we restrict ourselves to the $2$-step nilpotent case. See \cite[Chapter 6]{segal1983polycyclic} for a more in depth treatment.
	
	Let $(\mf g,+,[\_,\_])$ be a two step nilpotent lie-algebra over $\QQ$. Then we define the binary operator $*$ as$$
	*:\mf g\times\mf g\rightarrow\mf g:a,b\mapsto a*b=a+b+\frac{1}{2}[a,b].
	$$
	One shows that $(\mf g,*)$ is a torsion-free two-step nilpotent group. Furthermore, the group commutator of $\mf g,*$ is given by the lie-bracket $[\_,\_]$.
	
	Suppose we have a finitely-generated torsion-free two-step nilpotent group $(G,*)$. Then there exists some finite-dimensional two-step nilpotent Lie-algebra $(\mf g,+,[\_,\_])$, together with a group embedding $\log:(G,*)\rightarrow(\mf g,*)$ such that $\mf g=\log G\otimes\QQ$. 	
	The last condition implies that the dimension of $\mf g$ is equal to the Hirsch-length of $G$.

	Furthermore, if $\psi$ is an automorphism of $G$. Then there exists a unique Lie-algebra automorphism $\psi_{\mf g}$ on $\mf g$ such that the diagram commutes.
	\[\begin{tikzcd}
		G && G \\
		\\
		{\mf g} && {\mf g}
		\arrow["\psi", from=1-1, to=1-3]
		\arrow["\log"{description}, from=1-1, to=3-1]
		\arrow["\log"{description}, from=1-3, to=3-3]
		\arrow["{\psi_{\mf g}}", from=3-1, to=3-3]
	\end{tikzcd}\]
	
	Proofs of tje above statements all derive from \cite{segal1983polycyclic}
	Let $G$ be a generalised Heisenberg group, then the commutator induces a bilinear skew-symmetric map: $\omega:G/G_2\times G/G_2\rightarrow G_2\cong\ZZ:gG_2,hG_2\mapsto [g,h]$. This map is bilinear and skew symmetric.
	
	Using $\omega$, we can construct a new group $H_\omega$.
	\begin{definition}
		Let $(H_\omega,*)$ be the group on the set $G/G_2\times G_2$ where $*$ is given by$$
		\begin{matrix}
		*&:&(G/G_2\times G_2)\times (G/G_2\times G_2)&\rightarrow& G/G_2\times G_2\\&:&((a,c)(a',c'))&\mapsto& (a+a',c+c'+\omega(a,a')).
		\end{matrix}$$
	\end{definition}
	
	This group $H_\omega$ is not isomorphic to $G$. However, $G$ can be embedded as an index $2$ subgroup into $H_\omega$.
	Indeed let $\mf g$ be the Lie-algebra associated to $G$, then $(H_\omega,*)$ is isomorphic to the subgroup $(\ZZ\log(G)+\frac{1}{2}\log(G_2),*)<(\mf g,*)$. Such an isomorphism can be realised by fixing $a_i$ a minimal generating set of $G$, $c$ a generator of $G_2$, and mapping $(a_iG_2)$ to $\log(a_i)$ and $(0,c)$ to $\log(c)$.
	
	This embedding is in such a manner that $G_2$ is of index $2$ in $(H_\omega)_2$ and such that the following diagram commutes:
	\[\begin{tikzcd}
		G && {G/G_2\times G_2} \\
		\\
		& {G/G_2}
		\arrow[ hook,from=1-1, to=1-3]
		\arrow["{\pi_{G_2}}"', from=1-1, to=3-2]
		\arrow["{\pi_1}", from=1-3, to=3-2]
	\end{tikzcd}.\]
	
	If in the above construction, the commutator subgroup $[G,G]$ has odd index in $G_2$, then $H_\omega$ is the lattice hull of $G$, otherwise $G$ and $H_\omega$ are both lattice groups.
	
	The embedding of $G$ into $H_\omega$ is not unique, but we fix a single embedding in the remainder of the paper, and regard $G$ as a subgroup of $H_\omega$.

	\subsection{The automorphisms}
	Let $\psi$ be an automorphism of $G$. We have that $\psi$ extends uniquely to an automorphism $\psi_{\mf g}$ on $\mf g$. Furthermore, as Lie-ring automorphisms preserve addition and the commutator, $\psi$ restricts to an automorphism on $H_\omega$. As no confusion is possible, we also denote this automorphism with $\psi$.
	Using the linearity of $\psi_\mf g$, we obtain that this automorphism can be written as
	$$
	\psi(g,h)=(\psi_c(g),\psi\vert_{G_2}(h)+\psi'(g))
	$$
	where $\psi_c$ is the induced morphism of $\psi$ on $G/G_2$, $\psi\vert_{G_2}$ is the restriction of $\psi$ to $G_2$, and $\psi'$ is a linear map from $G/G_2$ to $G_2$. We determine which combinations of maps $\psi_c,\psi\vert_{G_2}$ and $\psi'$ are possible.
	
	As $G_2$ is infinite cyclic, we have that $\psi\vert_{G_2}$ is either the identity $\id$, or taking the opposite $-\id$. We also must have that $\psi$ preserves the commutator, that is for any $g,h\in G$ we have:$$
	\psi[g,h]=[\psi(g),\psi(h)]
	$$
	and thus from $g,h\in\frac{G}{G_2} $ that $$
	\psi\vert_{G_2}(\omega(g,h))=\omega(\psi_c(g),\psi_c(h)).
	$$
	Thus if $\psi\vert_{G_2}$ is the identity, then $\psi_c$ preserves $\omega$, and if $\psi\vert_{G_2}$ is taking the opposite, then $\psi_c$, changes the sign of $\omega$.

	We will need multiple variations of $\psi$ on different subgroups and quotients of $H_\omega$, for this we introduce the following notation.

	\begin{definition}
		Let $G$ be as above, let $\psi$ be an automorphism on $G$, denote then:\begin{itemize}
			\item $\psi_c$ the induced map on $G/G_2$, $d_c$ the dimension of $G/G_2$ and $r_c$ the rank of $\psi_c-\id$;
			\item $\psi_z$ the induced map on $G/Z(G)$, $d_z$ the dimension of $G/Z(G)$ and $r_z$ the rank of $\psi_z-\id$;
			\item $\psi_{z/c}$ the restriction of $\psi_c$ to $Z(G)/G_2$, $d_c$ the dimension of $G(Z)/G_2$ and $r_{z/c}$ the rank of $\psi_{z/c}-\id$.
		\end{itemize}
	\end{definition}
	
	The choice of the function $d$ from \cref{prop:maintheorem}, will be determined in large part by the following constant.
	\begin{definition}
		Given a group $G$ as above and an automorphism $\psi$ on $G$ as above, define then
		$$
		\mf d_\psi=(d_c-r_c)-(d_{z/c}-r_{z/c}).
		$$
	\end{definition}
	
	We are now ready to state \cref{prop:maintheorem} in full specificity.

	\begin{customthm}{B}\label{prop:completemaintheorem}
		Let $G$ be a generalised heisenberg group with automorphism $\psi$, then $$\gr_{\psi,G}(n)\simeq n^{d_c-r_c}d_{\psi}(n)$$ where $d$ is given by$$\begin{cases}
			\text{$d(n)=1$ if $\psi\vert_{G_2}=-1$ or if $\psi'(\ker(\psi_{z/c}-\id))$ is non-trivial;}\\
			\text{$d(n)=1$ otherwise if $\mf d_\psi\geq 3$;}\\
			\text{$d(n)=\log(n)$ otherwise if $\mf d_\psi=2$;}\\
			\text{$d(n)=n$ otherwise if $\mf d_\psi=1$;}\\
			\text{$d(n)=n^2$ otherwise if $\mf d_\psi=0$.}
		\end{cases}$$
	\end{customthm}
	\begin{remark}
		\sloppy
	In the above theorem we can reformulate the condition ``$\psi\vert_{G_2}=-1$ or ${\psi'(\ker(\psi_{z/c}-\id))}$ is non-trivial" as ``There exists some $g\in Z(G)$ such that $\psi_c(gG_2)=gG_2$ but $\psi(g)\neq g$."
	If these conditions hold, then we say we are in the \textbf{non-degenerate} case.
	\end{remark}
	As an immediate consequence of the main theorem, we have the following relation with the conjugacy growth.
	\begin{corollary}
		Let $G$ be a generalised Heisenberg group and let $\psi$ be an automorphism of $G$, then $$
		\gr_{\sim G}(n)\succ \gr_{\psi,G}(n).
		$$
	\end{corollary}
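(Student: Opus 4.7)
The plan is to derive the corollary directly from \cref{prop:completemaintheorem}. Applied to $\psi$ it yields $\gr_{\psi,G}(n)\simeq n^{d_c-r_c}d_\psi(n)$, and applied to $\psi=\id$ it yields $\gr_{\sim G}(n)=\gr_{\id,G}(n)\simeq n^{d_c}d_{\id}(n)$, since the identity has $r_c=0$ and is degenerate (its restriction to $G_2$ is $\id$, and its $\psi'$ vanishes). Because $d_c$ is an invariant of $G$, the claim reduces to the single inequality
$$d_\psi(n)\preceq n^{r_c}\,d_{\id}(n).$$

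The crucial numerical input is the identity
$$\mf d_{\id}-\mf d_\psi = r_c-r_{z/c},$$
which is immediate from the definition of $\mf d$ (both $d_c$ and $d_{z/c}$ are $\psi$-independent), combined with the elementary comparison $r_{z/c}\leq r_c$. The latter holds because $\psi_{z/c}$ is, by definition, the restriction of $\psi_c$ to the $\psi_c$-invariant subspace $Z(G)/G_2$, so the image of $\psi_{z/c}-\id$ is contained in the image of $\psi_c-\id$. Consequently $r_c\geq\mf d_{\id}-\mf d_\psi\geq 0$; in particular $\mf d_\psi\leq\mf d_{\id}$.

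It then remains to verify the reduced inequality $d_\psi(n)\preceq n^{r_c}d_{\id}(n)$. If $\psi$ is non-degenerate then $d_\psi\equiv 1$ and the bound is immediate. Otherwise, using $r_c\geq\mf d_{\id}-\mf d_\psi$, it suffices to establish the stronger bound $d_\psi(n)\preceq n^{\mf d_{\id}-\mf d_\psi}d_{\id}(n)$; this follows from a short case-check over the admissible pairs $(\mf d_\psi,\mf d_{\id})$ with $\mf d_\psi\leq\mf d_{\id}$ drawn from $\{0,1,2,\geq 3\}$, reading off $d_\psi$ and $d_{\id}$ from \cref{prop:completemaintheorem} (for instance, the pair $(0,2)$ requires $n^2\preceq n^2\log(n)$, which is immediate). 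The main obstacle is not any genuinely hard estimate but rather keeping the various ranks and dimensions straight: once $\mf d_{\id}-\mf d_\psi=r_c-r_{z/c}$ and $r_{z/c}\leq r_c$ are in hand, everything else is routine bookkeeping, as \cref{prop:completemaintheorem} has already done all the real work.
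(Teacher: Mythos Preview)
Your proof is correct and follows essentially the same route as the paper: apply \cref{prop:completemaintheorem} to both $\psi$ and $\id$, reduce to the inequality $d_\psi(n)\prec n^{r_c}d_{\id}(n)$, use the relation $\mf d_\psi\geq\mf d_{\id}-r_c$, and verify the remaining comparison of the functions $d$ by direct inspection. You supply a little more detail than the paper (the explicit identity $\mf d_{\id}-\mf d_\psi=r_c-r_{z/c}$, the justification that $\id$ is degenerate, and a separate treatment of the non-degenerate case), but the argument is the same.
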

	\begin{proof}
		As $\gr_{\sim G}(n)=\gr_{\id G}(n)$, we have that $\gr_{\sim G}(n)\simeq n^{d_c}d_{\mf d_\id}(n)$ where $\mf d_\id$ is given by $d_c-d_{z/c}$. Let $r_c$ be the rank of $\id-\psi_c$, then $\mf d_\psi\geq \mf d_\id -r_c$. One checks from the definition of $d$ that this implies that $d_{\mf d_\psi}(n)\prec n^{r_c}d_{\mf d_\id}(n)$, or thus after multiplying both sides with $n^{d_c-r_c}$ that $$n^{d_c-r_c}d_{\mf d_\psi}(n)\prec n^{d_c}d_{\mf d_\id}(n).$$
	\end{proof}
	
	\begin{lemma}\label{prop:nonPolynomial}
		For every torsion free group $G$ with infinite cyclic commutator subgroup, there exists some automorphism $\psi$ such that $\gr_{\psi,G}\simeq n^k\ln(n)$ for some integer $k$.
	\end{lemma}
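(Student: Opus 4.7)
The plan is to invoke \cref{prop:completemaintheorem}: we have $\gr_{\psi,G}(n)\simeq n^k\ln(n)$ exactly when $\psi$ falls in the degenerate regime ($\psi\vert_{G_2}=\id$ and $\psi'$ vanishing on $\ker(\psi_{z/c}-\id)$) and $\mf d_\psi=2$. Constructing such a $\psi$ on an arbitrary generalised Heisenberg group $G$ therefore suffices. I first note that $\omega$ on $G/G_2$ is skew-symmetric with radical exactly $Z(G)/G_2$, so the induced form on $G/Z(G)$ is non-degenerate symplectic; in particular $d_c-d_{z/c}=2m$ is even, and $2m\geq 2$ since $[G,G]$ is non-trivial.

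If $2m=2$, I would take $\psi=\id$. The degenerate conditions hold vacuously ($\psi'=0$), and $\mf d_\id=d_c-d_{z/c}=2$, so \cref{prop:completemaintheorem} yields $\gr_{\id,G}(n)\simeq n^{d_c}\ln(n)$.

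For $2m\geq 4$, I would use the normal form for integer skew-symmetric bilinear forms to pick a $\ZZ$-basis $e_1,f_1,\ldots,e_m,f_m,c_1,\ldots,c_{d_{z/c}}$ of $G/G_2$ with $\omega(e_i,f_j)=\delta_{ij}k_i$ and all other basis pairings zero. Define $\psi_c$ to fix $e_1,f_1$ and every $c_j$, and on each pair $\langle e_i,f_i\rangle$ with $i\geq 2$ to act by $e_i\mapsto f_i$, $f_i\mapsto -e_i$. A direct computation shows $\psi_c$ preserves $\omega$ and that $\id-\psi_c$ has rank $2$ on every such block (its matrix is $\bigl(\begin{smallmatrix} 1 & 1\\ -1 & 1\end{smallmatrix}\bigr)$, with determinant $2$), hence $r_c=2(m-1)=d_c-d_{z/c}-2$, while $r_{z/c}=0$ since $\psi_c$ fixes $Z(G)/G_2$ pointwise. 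I would then lift to $G$ by setting $\psi\vert_{G_2}=\id$ and $\psi'=0$; since $\psi_c$ is integral and $\omega$-preserving, this defines an automorphism of $H_\omega$ which restricts to one of $G$ via the embedding from \cref{sec:BCH}. Then $\mf d_\psi=(d_c-r_c)-d_{z/c}=(d_{z/c}+2)-d_{z/c}=2$, and the degenerate conditions hold trivially (as $\psi'\equiv 0$), so \cref{prop:completemaintheorem} produces $\gr_{\psi,G}(n)\simeq n^{d_{z/c}+2}\ln(n)$.

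The only subtle point is checking that the prescribed data $(\psi_c,\id,0)$ genuinely defines an automorphism of $G$, rather than merely of the Malcev completion $\mf g$ or of $H_\omega$. This is routine because $\psi_c$ is integral and symplectic in the normal-form basis, so the induced map on $\mf g$ preserves the lattice $\log(G)$; one can either verify this directly on the chosen generators of $G$, or invoke the correspondence in \cref{sec:BCH} between Lie-algebra automorphisms preserving $\log(G)$ and group automorphisms of $G$.
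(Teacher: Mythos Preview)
Your overall strategy is sound and in fact cleaner than the paper's: using the integer normal form for $\omega$ on $G/G_2$ and the block rotations $e_i\mapsto f_i,\ f_i\mapsto -e_i$ avoids the paper's detour through a $\QQ$-symplectic basis, the rescaling constant $M$, and the somewhat ad hoc hyperbolic blocks. Your rank computation and the resulting $\mf d_\psi=2$ are correct.

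The gap is in the lifting step. The assertion that the triple $(\psi_c,\id,0)$ restricts from $H_\omega$ to $G$ is not justified, and in general it is false. Recall that $G$ sits in $H_\omega=G/G_2\times G_2$ as a specific index-$2$ subgroup: there is a parity function $\epsilon:G/G_2\to\ZZ/2\ZZ$ with $(a,b)\in G\iff b\equiv\epsilon(a)\pmod 2$, satisfying $\epsilon(a+a')\equiv\epsilon(a)+\epsilon(a')+\omega(a,a')\pmod 2$. The map $(a,b)\mapsto(\psi_c(a),b)$ restricts to $G$ only when $\epsilon\circ\psi_c=\epsilon$, and your rotation sends $e_i\mapsto f_i$, so you would need $\epsilon(e_i)=\epsilon(f_i)$ for all $i\geq 2$ --- which depends on the fixed embedding and need not hold. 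Your remark that ``the induced map on $\mf g$ preserves the lattice $\log(G)$'' does not settle this: $\log(G)$ is not a $\ZZ$-sublattice of $\mf g$ but only a subset closed under the BCH product, and the linear lift of $\psi_c$ with $\psi'=0$ is exactly the $H_\omega$-automorphism whose restriction to $G$ is in question.

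The repair is immediate and is precisely what the paper does. Either (i) define $\psi$ directly on $G$ by choosing lifts $\tilde e_i,\tilde f_i,\tilde c_j\in G$ and declaring $\psi(\tilde e_i)=\tilde f_i$, $\psi(\tilde f_i)=\tilde e_i^{-1}$ for $i\geq 2$, fixing the rest --- this extends to an automorphism since $\psi_c$ preserves $\omega$, but its $\psi'$ in the $H_\omega$ picture need not vanish; or (ii) keep the $H_\omega$ viewpoint and set $\psi'(a_i)\in\{0,1\}$ to correct the parity, as the paper does. In either description the crucial point survives: because $\psi_c$ fixes $Z(G)/G_2$ pointwise, the lifted $\psi$ fixes the chosen lifts $\tilde c_j$, hence $\psi'$ vanishes on $\ker(\psi_{z/c}-\id)=Z(G)/G_2$, and the degenerate hypotheses of \cref{prop:completemaintheorem} hold with $\mf d_\psi=2$. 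So drop the claim $\psi'=0$ and replace it by this observation; the rest of your argument then goes through.
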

	\begin{proof}
		In the case where $d_z=2$, we have by \cite{evetts2023conjugacy} that $\gr_{\id,G}=\gr_{\sim G}\simeq n^2\ln(n)$. Assume thus for the remainder that $d_z\geq 4$.
		
		The centre $Z(G)$ is isolated and thus can we realise $G/G_2$ as $Z(G)\oplus G'$ for some $G'<G/G_2$. Modding out $Z(G)$ then gives an identification between $G'$ and $G/Z(G)$.
		We have that $\omega$ is non-degenerate on $G/Z(G)$ and thus a symplectic form on $G/Z(G)\otimes \QQ$. We may thus find a symplectic basis $\{p_1,q_1,p_2,q_2,\cdots p_m,q_m\}$. After rescaling these elements, we have that as $\ZZ$-modules $$\langle p_1,q_1,\cdots,p_m,q_m\rangle\supset G/Z(G)\supset M\langle p_1,q_1,\cdots,p_m,q_m\rangle$$ for some constant $M$.
		Consider then the linear map $\psi_z$ acting on $(p_1,q_1)$ and $(p_2,q_2)$ as the matrix$$
		\begin{pmatrix}
			1&M\\
			0&1\\
		\end{pmatrix}
		$$
		and on all other pairs $(p_i,q_i)$ as the matrix$$
		\begin{pmatrix}
			1-2M+2M^2&2M^2\\
			2M^2&1+2M+2M^2
		\end{pmatrix}.
		$$
		Notice that both of these matrices have determinant $1$ and thus preserve the symplectic form. Furthermore, $\psi_z$ restricts to a map on the $\ZZ$-module $G/Z(G)$: $\psi_z$ splits as $\id+M\tilde\psi$ for some $\tilde\psi$ with integer coefficients. The map $\id$ preserves $G/Z(G)$ and $M\tilde\psi$ maps elements of $G/Z(G)$ to  $M\langle p_1,q_1,\cdots,p_m,q_m\rangle\subset G/Z(G)$.
		We then lift $\psi_z$ to a map $\psi_c$ on $G/G_2$ by requiring that $\psi_c$ acts trivially on $Z(G)/G_2$, and as $\psi_z$ on $G'\cong G/Z(G)$. 
		
		Now we want to lift $\psi_c$ to a map $\psi:G\rightarrow G$ such that $\psi$ belongs to the degenerate case. 
		Let $\{a_1,a_2,\cdots a_{2m},\cdots, a_k\}$ be a basis of $G/G_2$ such that $a_{2m+1},\cdots,a_k$ is a basis of $Z(G)/G_2$. Let $b_i$ be such that $(a_i,b_i)\in G$ and $b'_i$ such that $(\psi_c(a_i),b_i')\in G$. Let $\psi':G/G_2\rightarrow \ZZ$ then be the unique linear map, mapping $a_i$ to $0$ if $b_i\cong b_i'\mod 2$ and to $1$ otherwise. Notice that $\psi'(Z(G)/G_2)=0$. 
		
		Let $\psi$ then be given by $\psi:H_\omega\rightarrow H_\omega:(a,b)\mapsto (\psi_c(a),\psi'(a)+b)$. Using that $\psi_z$ preserves $\omega$, one checks that this is indeed a group morphism on $H_\omega$. By construction of $\psi'$, this map restricts to a map $\psi:G\rightarrow G$. Furthermore, we claim that $d_\psi(n)=\log(n)$. Indeed as mentioned before, we have that $\psi'(Z(G)/G_2)=0$. Furthermore, by construction of $\psi_c$, we have that $d_c-r_c=2+d_{z/c}$ and $r_{z/c}=0$. As $\psi_c$ preserves the decomposition $Z(G)\oplus G'$, we have $d_c=d_z+d_{z/c}$ and $r_c=r_z+r_{z/c}$. It follows that $\mf d_\psi=d_z-r_z$ which by construction is equal to $2$ and thus we have that $\gr_{\psi,G}(n)\simeq n^k\ln(n)$.
	\end{proof}
		From \cite[Theorem 2.16]{evetts2023conjugacy}, we have that if $\gr_{\psi,G}$ does not grow polynomially, then the growth series is transcendental. By \cref{prop:nonPolynomial} we thus have:
	\begin{corollary}
		For every torsion free group $G$ with infinite cyclic commutator subgroup, there exists some automorphism $\psi$ such that the twisted conjugacy growth series of $G$ and $\psi$ is transcendental.
	\end{corollary}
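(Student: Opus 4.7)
The plan is to combine the two results the paper lines up immediately before the statement: \cref{prop:nonPolynomial} and \cite[Theorem 2.16]{evetts2023conjugacy}. Concretely, the first step is to invoke \cref{prop:nonPolynomial} to produce, for the given torsion-free group $G$ with infinite cyclic commutator subgroup, an automorphism $\psi$ and an integer $k\geq 0$ such that
\[
\gr_{\psi,G}(n)\simeq n^{k}\ln(n).
\]
Once this $\psi$ is fixed, the rest of the argument is to certify that its twisted conjugacy growth series is not rational (in fact transcendental) by showing that the growth function is not equivalent to a polynomial and then applying \cite[Theorem 2.16]{evetts2023conjugacy}.

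The second step, then, is the verification that $n^{k}\ln(n)$ is not $\simeq$-equivalent to any polynomial $n^{m}$. For $m\leq k$ one has $n^{k}\ln(n)/n^{m}\to\infty$, so no multiplicative constant $c$ can satisfy $n^{k}\ln(n)\leq c(cn)^{m}$ for all $n$; and for $m\geq k+1$ one has $n^{m}/(n^{k}\ln(n))\to\infty$, which similarly rules out $n^{m}\leq c\,(cn)^{k}\ln(cn)$ uniformly in $n$. Hence $\gr_{\psi,G}$ is not equivalent to a polynomial, i.e.\ it does not grow polynomially in the sense of \cite{evetts2023conjugacy}.

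The third and final step is a direct appeal to \cite[Theorem 2.16]{evetts2023conjugacy}: since $\gr_{\psi,G}$ does not grow polynomially, its generating series $\sum \gr_{\psi,G}^{S}(n)\,x^{n}$ cannot be rational, and in fact must be transcendental over $\QQ(x)$. Because transcendence of the series is a $\simeq$-invariant consequence of the non-polynomiality of the coefficient sequence, this conclusion is independent of the chosen generating set $S$, giving the statement.

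I do not expect any real obstacle here; the content of the corollary is essentially the combination of \cref{prop:nonPolynomial} with the cited Evetts theorem, and the only small check is the elementary non-polynomiality of $n^{k}\ln(n)$ under the equivalence relation $\simeq$. The genuinely substantive work has already been done in constructing the automorphism of \cref{prop:nonPolynomial}.
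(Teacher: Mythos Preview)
Your proposal is correct and follows exactly the approach the paper takes: invoke \cref{prop:nonPolynomial} to obtain an automorphism with $\gr_{\psi,G}(n)\simeq n^k\ln(n)$, note this is not polynomial, and then apply \cite[Theorem 2.16]{evetts2023conjugacy} to conclude transcendence. The paper compresses this into a single sentence preceding the corollary, while you have spelled out the elementary check that $n^k\ln(n)\not\simeq n^m$; no further content is needed.
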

	\section{Proof of the non-degenerate case}\label{sec:nonDegen}
	First we restrict ourselves to the case where either $\psi\vert_{G_2}=-1$ or where $\psi'(\ker(\psi_{z/c}-\id))$ is non-trivial.
	The idea in both cases is to use \cref{prop:finiteCenterReduction} to reduce to the abelian case.
	\begin{lemma}\label{prop:negsign}
		Let $\psi$ be an automorphism of $G$, such that $\psi\vert_{G_2}$ flips the sign.
		Then for any $(a,b)\in G$ and any $n\in\ZZ$, we have that $(a,b+4n)\in G$ and $(a,b)\sim_\psi(a,b+4n)$ in $G$.
	\end{lemma}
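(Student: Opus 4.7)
The approach is to exploit the interplay between the centrality of $G_2$ and the sign-flip $\psi\vert_{G_2} = -\id$. For any central element $g$ with $\psi(g) = g^{-1}$, the conjugation expression collapses: $\psi(g)^{-1} = g$, so $g \cdot (a,b) \cdot \psi(g)^{-1} = g^2 \cdot (a,b)$. The element I will use is $g := c^n$, where $c$ is the fixed generator of $G_2$.

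The key bookkeeping ingredient is the embedding convention $G \hookrightarrow H_\omega$, under which the generator $c$ corresponds to $(0, 2c) \in H_\omega$ rather than to $(0, c)$. This follows from a direct cocycle computation in $H_\omega$: one finds $[(a_iG_2, 0), (a_jG_2, 0)] = (0, 2\omega(a_iG_2, a_jG_2))$, and by picking generators $a_i, a_j$ of $G$ with $[a_i, a_j] = c$, the image of $c \in G$ is forced to be $(0, 2c)$. This also matches the fact (stated after the definition of $H_\omega$) that $G_2$ sits as an index-$2$ subgroup of $(H_\omega)_2$. Consequently $c^{2n}$ corresponds to $(0, 4n)$ under the embedding.

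From here both claims are immediate. For the containment, write $(a, b+4n) = (a, b) \cdot (0, 4n) = (a, b) \cdot c^{2n}$, a product of two elements of $G$, hence in $G$. For the $\psi$-conjugacy, with $g = c^n$ we have $\psi(g) = c^{-n}$, so $\psi(g)^{-1} = c^n = g$, and then $g \cdot (a, b) \cdot \psi(g)^{-1} = g^2 \cdot (a, b) = c^{2n} \cdot (a, b) = (a, b + 4n)$ by centrality of $c^{2n}$. There is no substantive obstacle; the proof is a short mechanical verification. The factor of $4$ (rather than $2$) in the statement is forced precisely by the doubling convention $c \mapsto (0, 2c)$, since shifts by odd multiples of $2$ would require a conjugator of the form $(0, k) \notin G$.
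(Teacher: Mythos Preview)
Your proof is correct and follows exactly the paper's approach: both conjugate $(a,b)$ by the central element $(0,2n)$ (your $c^{n}$) and use $\psi(0,2n)=(0,-2n)$ to obtain $(0,2n)(a,b)(0,2n)=(a,b+4n)$. Your added justification that $(0,2n)\in G$ via the index-$2$ statement is precisely what the paper's one-line assertion relies on; the commutator sanity check you give is fine but, as written, assumes one can achieve $[a_i,a_j]=c$, which needs $[G,G]=G_2$ --- in general take any nontrivial commutator and divide, or (as you also do) simply invoke the stated index-$2$ fact.
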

		\begin{proof}
			Notice that $(0,2n)\in G$. We have\begin{align*}
			(0,2n)(a,b)\psi(0,2n)\inv&=(0,2n)(a,b)(0,2n)\\
			&=(a,b+4n).
		\end{align*}
		The result follows immediately.
	\end{proof}
	For the second case we do something similar.
	\begin{lemma}\label{prop:centershift}
	Let $\psi$ be an automorphism of $G$ such that there exists some $g\in \ker(\psi_{z/c}-\id)$ satisfying $\psi'(g)\neq 0$,
	then for any $(a,b)\in G$ and any $n\in \ZZ$, $$(a,b)\sim_\psi(a,b+2n\psi'(g)).$$
	\end{lemma}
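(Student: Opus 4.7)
The plan is to produce an explicit conjugator inside $G$ lying above a $\ZZ$-multiple of $g$ in the quotient $G/G_2$, and then verify the resulting shift by direct computation inside $H_\omega$. The hypothesis $g\in\ker(\psi_{z/c}-\id)\subseteq Z(G)/G_2$ carries two useful consequences at once: centrality of $g$, so $\omega(g,\cdot)=0$, which kills all cocycle corrections in the group law on $H_\omega$; and $\psi_c(g)=g$, so $\psi$ applied to any lift of $ng$ stays above $ng$ and only perturbs the $G_2$-coordinate by $n\psi'(g)$.

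Concretely, I would fix a lift $(g,h)\in G$ of $g$ (possible since $G$ surjects onto $G/G_2$) and observe that $(g,h)^n=(ng,nh)$ for every $n\in\ZZ$, because $\omega(g,g)=0$. Expanding $(ng,nh)(a,b)\psi(ng,nh)^{-1}$ in $H_\omega$ and using $\omega(g,\cdot)=0$ at every occurrence, the $G/G_2$-coordinate collapses to $a$ and the $G_2$-coordinate simplifies to $b+n\bigl(h-\psi\vert_{G_2}(h)\bigr)-n\psi'(g)$.

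If $\psi\vert_{G_2}=\id$, the middle term vanishes and one obtains $(a,b)\sim_\psi (a,b-n\psi'(g))$ for every $n\in\ZZ$, which is strictly stronger than the stated conclusion. If instead $\psi\vert_{G_2}=-\id$, replacing $n$ by $2n$ (i.e.\ using the conjugator $(g,h)^{2n}=(2ng,2nh)$) produces the shift $b+4nh-2n\psi'(g)$; the residual $4nh$ is then absorbed using \cref{prop:negsign}, which provides further twisted conjugations shifting the $G_2$-coordinate by any multiple of $4$ (take $m=-nh$). In either subcase, replacing $n$ by $-n$ yields $(a,b)\sim_\psi (a,b+2n\psi'(g))$.

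The main obstacle is bookkeeping: the $\omega$-cocycle enters the $H_\omega$ product, the computation of $\psi(ng,nh)^{-1}$, and the final recombination, so one has to verify systematically that every $\omega$-term really does have $g$ in one of its slots, and therefore vanishes. Once this is checked, together with the easy observation that $\psi(ng,nh)=(ng,n\psi\vert_{G_2}(h)+n\psi'(g))$, the computation is direct and there is no further subtlety.
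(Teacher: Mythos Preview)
Your argument is correct and follows essentially the same idea as the paper: conjugate by a lift of a multiple of $g$ and use $\psi_c(g)=g$ together with $\omega(g,\cdot)=0$ to collapse everything except the desired shift in the $G_2$-coordinate. The only difference is a cosmetic one: the paper first observes that from any lift $(-g,c)\in G$ one gets $(-2g,0)\in G$, and then conjugates by $(-2ng,0)$; because the second coordinate of the conjugator is $0$, the term $\psi\vert_{G_2}(0)=0$ vanishes regardless of the sign of $\psi\vert_{G_2}$, so no case split on $\psi\vert_{G_2}$ and no appeal to \cref{prop:negsign} are needed. Your route reaches the same conclusion, just with a little extra bookkeeping.
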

	\begin{proof}
		Let $(-g,c)\in G$, then $(-2g,2c)\in G$ and thus $(-2g,0)\in G$.
		Furthermore,
		 \begin{align*}
			(-2ng,0)(a,b)\psi(-2ng,0)\inv&=(-2ng,0)(a,b)(2ng,2n\psi'(g))\\
			&=(a,b+2n\psi'(g)).
		\end{align*}
		The first equality holds because $\psi_c(g)=g$ and the second because $\omega(g,\_)$ vanishes.
	\end{proof}
	Notice that the above also holds when $\psi'(g)=0$, but then the statement becomes trivial.
	
	We now obtain the first case of \cref{prop:completemaintheorem}:
	\begin{customthm}{B.1}
		Let $G$ be as before. Suppose that either $\psi\vert_{G_2}=-1$ or that $\psi'$ is non-trivial on $\ker(\psi_{z/c}-\id)$. Then $$
		\gr_{\psi,G}\simeq n^{\dim(G/G_2)-r_c}
		$$
	\end{customthm}
	\begin{proof}
		This is immediate by combining either \cref{prop:negsign} or \cref{prop:centershift} with \cref{prop:finiteCenterReduction} and \cref{prop:freeAbelian}.
	\end{proof}

	\section{Twisted conjugacy classes of the degenerate case}\label{sec:classes}
	For the remainder of this paper we will assume that $\psi$ is degenerate, that is $\psi\mid_{G_2}=\id$ and for $g\in Z(G)/G_2$, if $\psi_c(g)=g$, then $\psi'(g)=0$.
	
	In this subsection we describe the $\psi$-conjugacy classes of $G$.

	In the ``negative" case, twisted conjugation with elements of $G_2$ played an important role, here however this will not be the case. The reason being the following lemma:
	\begin{lemma}\label{prop:positiveDetCenterIrelivantExt:lemma}
		$\psi$-Conjugating with $(a,c)$ or with $(a,0)$ yields the same result.
	\end{lemma}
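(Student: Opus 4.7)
The plan is to prove this by a direct calculation in $H_\omega$, relying crucially on the assumption $\psi|_{G_2}=\id$ that is built into degeneracy (the second part of the degeneracy assumption, concerning $\ker(\psi_{z/c}-\id)$, plays no role in this particular lemma).

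First I would write out $\psi(a,c)$ explicitly using the decomposition $\psi(g,h)=(\psi_c(g),\psi|_{G_2}(h)+\psi'(g))$. Since we are in the degenerate case, $\psi|_{G_2}=\id$, so $\psi(a,c)=(\psi_c(a),c+\psi'(a))$, and inverting in $H_\omega$ (where $(x,y)^{-1}=(-x,-y)$ because $\omega$ is skew-symmetric) gives
\[
\psi(a,c)^{-1}=(-\psi_c(a),-c-\psi'(a)).
\]
Similarly $\psi(a,0)^{-1}=(-\psi_c(a),-\psi'(a))$.

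Next I would multiply out both triple products $(a,c)(x,y)\psi(a,c)^{-1}$ and $(a,0)(x,y)\psi(a,0)^{-1}$ using the formula $(u,v)(u',v')=(u+u',v+v'+\omega(u,u'))$. In the first product, the left factor contributes $+c$ in the $G_2$-component while the inverted right factor contributes $-c$, and these cancel exactly; this is the one and only place where $\psi|_{G_2}=\id$ enters, since with the opposite sign the two $c$'s would add rather than cancel. The $G/G_2$-components agree trivially because $c$ lies in the centre. After cancellation, both products reduce to the same element
\[
\bigl(a+x-\psi_c(a),\; y+\omega(a,x)-\psi'(a)-\omega(a+x,\psi_c(a))\bigr),
\]
which proves the claim.

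There is no real obstacle here; the only thing to be careful about is the sign bookkeeping, and to emphasise explicitly that the identity on $G_2$ is precisely what makes the two central contributions cancel. This is also consistent with the contrast to \cref{prop:negsign}, where the opposite sign of $\psi|_{G_2}$ is exactly what allowed central conjugation to produce a non-trivial shift.
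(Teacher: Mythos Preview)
Your proof is correct. The paper's argument is essentially the same idea but packaged a little more cleanly: instead of expanding both triple products in full, it writes $(a,c)=(a,0)(0,c)$ and observes that $\psi$-conjugation by the central element $(0,c)$ is trivial because $\psi((0,c))=(0,c)$ (this is exactly your use of $\psi\vert_{G_2}=\id$), so the extra factor $(0,c)$ drops out without ever having to multiply everything out. Your direct computation verifies the same cancellation explicitly.
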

	\begin{proof}
		Notice that $(a,c)=(a,0)(0,c)$. We thus need to demonstrate that $\psi$-conjugation with $(0,c)$ does not change the result. Indeed $\psi((0,c))=(0,c)$ and thus we have $$
		(0,c)(a',c')\psi((0,c))\inv=(a',c'+c-c).
		$$
	\end{proof}
	In this case, the $\psi$-conjugation action thus only depends on the abelianization.
	Furthermore, we have for $(a,c)\in H_\omega$ that either $(a,c)\in G$ or $(a,c+1)\in G$. We thus have that $2$ elements of $G$ are $\psi$-conjugate in $G$ if and only if they are in $H_\omega$.
	
	One piece of setup we will often need is the following.
	\begin{definition}
		Let $M$ be a free $\ZZ$-module and let $I$ and $J$ be submodules of $M$, then we call $I$ and $J$ transversal if\begin{itemize}
			\item $I+J$ is finite index in $M$;\\
			\item $I\cap J={\{0\}}$.
		\end{itemize}
		In this case we say that $J$ is a transversal of $I$.
		If furthermore $\mc A$ is a finite set containing $0$ such that $I+J+\mf A=M$, then we call $(I,J,\mf A)_M$ a transversality triple.
	\end{definition}
	Let $I$ be a submodule of a finitely generated free $\ZZ$-module, then there always exists some $J$ and $\mf A$ such that $(I,J,\mf A)_M$ is a transversality triple.

	\begin{lemma}\label{prop:conjToCokernel:lemma}

		Let $M$ be a finite dimensional free $\ZZ$-module with finite generating set $S$. Let $\psi$ be an automorphism on $M$, let $I$ be the image of $\psi-\id$ and let $(I,J,\mf A)_M$ be a transversality triple. Then we can decompose $v\in M$ as $$
		v=\left(\psi(w)-w\right) +j + a
		$$
		for some $w\in M$, $j\in J$ and $a\in\mf A$ where $\norm{w}_S\leq C_1\norm{v}_S$ and $\norm{j}_S\leq C_2\norm{v}_S$ for some $C_1,C_2>0$ only depending on $(I,J,\mf A)_M$.
	\end{lemma}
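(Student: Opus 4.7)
My plan is to build the decomposition in three steps matching the three summands on the right-hand side. First, I would use that $\mf A$ is a finite transversal of $I + J$ in $M$ to pick $a \in \mf A$ with $v - a \in I + J$. Since $I \cap J = \{0\}$, the sum $I + J$ is internal direct, so $v - a$ decomposes uniquely as $i + j$ with $i \in I$ and $j \in J$. Finally, since $I$ is a submodule of the finite rank free $\ZZ$-module $M$, it is itself free, so the surjection $\psi - \id : M \to I$ admits a $\ZZ$-linear section $\sigma$; taking $w := \sigma(i)$ gives $\psi(w) - w = i$ and hence $v = (\psi(w) - w) + j + a$ as required.

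For the norm bounds I would invoke the basic fact that any $\ZZ$-linear map between finitely generated free $\ZZ$-modules is Lipschitz with respect to word norms: writing $x$ as a sum of $\norm{x}_S$ generators and expanding each generator's image gives the bound at once, with Lipschitz constant $\max_{s \in S}\norm{\phi(s)}_T$. Applied to the $\ZZ$-linear maps $\sigma \circ \pi_I : I + J \to M$ and $\pi_J : I + J \to M$, where $\pi_I$ and $\pi_J$ are the projections coming from the internal direct sum, this bounds $\norm{w}_S$ and $\norm{j}_S$ in terms of an intrinsic word norm on $I + J$. Since $I + J$ has finite index in $M$ it is undistorted as a subgroup, so that intrinsic norm is comparable to the restriction of $\norm{\cdot}_S$, and the Lipschitz bound transfers. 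Combining with $\norm{v - a}_S \leq \norm{v}_S + \max_{a' \in \mf A}\norm{a'}_S$ and $\norm{v}_S \geq 1$ for nonzero $v$ (the case $v = 0$ being trivial) lets me absorb the additive constant, producing the desired $C_1$ and $C_2$.

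The main obstacle I anticipate is this comparison between the intrinsic word norm on the subgroup $I + J$ and the restriction of the ambient norm $\norm{\cdot}_S$ from $M$: the algebraic content is entirely routine, but to transfer the Lipschitz bound into $\norm{\cdot}_S$ cleanly one needs either the finite-index undistortedness of $I + J$ in $M$, or, equivalently, to pass to $M \otimes \QQ$ and observe that all $\QQ$-linear norms on it are commensurable and all restrict to norms on the lattice $M$ that are comparable to $\norm{\cdot}_S$.
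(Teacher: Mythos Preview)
Your proposal is correct and follows essentially the same route as the paper: pick $a\in\mf A$ with $v-a\in I+J$, project to $I$ and $J$, and lift the $I$-part through $\psi-\id$, then bound everything by operator/Lipschitz constants of the linear maps involved. The only cosmetic differences are that the paper obtains the lift by choosing a complement $K^{\perp}$ to the (pure) kernel $K=\ker(\psi-\id)$ rather than invoking projectivity of $I$ to get a section, and that the paper absorbs your undistortedness step into the single remark that the projections are ``linear maps on finite dimensional spaces'' and hence have finite operator norm with respect to $\norm{\cdot}_S$.
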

	\begin{proof}
		Passing to a subset of $\mf A$ if necessary, we may assume that $\mf A$ contains exactly one representative of each of the cosets of $\frac{M}{I+J}$. As $\mf A$ is a finite set, the norm of its element is bounded above, denote this bound $\norm{\mf A}_S$.
		As $I\cap J =0$, there exits a unique pair of linear maps $\pi_I:I+J\rightarrow I,\pi_J:I+J\rightarrow J$ satisfying $v=\pi_I(v)+\pi_J(v)$ for all $v\in I+J$.  Let $\norm{\pi_I}_S$ and $\norm{\pi_J}_S$ denote their operator norm, that is $$\norm{\pi_I}_S=\sup_{g\in M\without 0}\frac{\norm{\pi_I(g)}_S}{\norm{g}_S}$$ and $\norm{\pi_j}_S$ is defined analogously.
		These maps are linear maps on finite dimensional spaces thus these norms are well defined positive real numbers.
		
		Let $K$ be the kernel of $\psi-\id$. As $K$ is the kernel of a linear map with a torsion free codomain, $K$ must be radical. It follows that there exists some space $K^\perp<M$ such that $M=K\oplus K^\perp$. As $\psi-\id$ is a map from $M$ to $M$ with $I$ as range, and $K$ as kernel, $\psi-\id$ induces a linear isomorphism between $\frac{M}{K}=K^\perp$ and $I$. Denote $(\psi-\id)\inv:I\rightarrow K^\perp$ this inverse and denote $\norm{(\psi-\id)\inv}_S$ its norm.
		
		By definition of $\mf A$, there exits a unique element $a_v\in\mf A$ such that $v-a_v\in I+J$.
		Let now $j=\pi_J(v-a_v)$ and let $w=(\psi-\id)\inv(\pi_I(v-a_v))$. For $C_1=\norm{\pi_i}_S\norm{(\psi-\id)\inv}_S(1+\norm{\mf A}_S)$ and $C_2=\norm{\pi_J}_S(1+\norm{\mf A}_S)$, we now have that $\norm{j}_S\leq C_2\norm{v}_S$ and $\norm{w}_S\leq C_1\norm{v}_S.$
		Furthermore, $(\psi-\id)(w)+j=\pi_I(v-a_v)+\pi_J(v-a_v)=v-a_v$ and thus, rearranging the terms, we obtain$$
		v=\psi(w)-w+j+a_v.
		$$
		
	\end{proof}
	If we use the above lemma on the abelianization $G/G'$, we obtain a similar result on $G$ itself.
	\begin{lemma}\label{prop:conjToCokernelStrong:lemma}
		Let $S$ be a generating set of $G$, let $I$ be the image of $(\psi_c-\id)$ in $G/G_2$ and let $(I,J,\mf A)_{G/G_2}$ be a transversality triple. Then there exists some constant $C$, such that every element $v\in G$ is $\psi$-conjugate to an element $v'$ such that $\norm{v'}_S\leq C\norm{v}_S$ and such that $v'G_2\in G/G_2$ is of the form $a+j$ for some $a\in\mf A$ and $j\in J$.
	\end{lemma}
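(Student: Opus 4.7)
The plan is to apply \cref{prop:conjToCokernel:lemma} on the abelianization $G/G_2$ with respect to the induced automorphism $\psi_c$, and then lift the resulting conjugating element back to $G$.

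Concretely, I would let $\pi \colon G \to G/G_2$ denote the projection and equip $G/G_2$ with the generating set $\pi(S)$. Given $v \in G$, applying \cref{prop:conjToCokernel:lemma} to $\pi(v)$ with the triple $(I,J,\mf A)_{G/G_2}$ produces elements $\bar w \in G/G_2$, $j \in J$ and $a \in \mf A$ satisfying
$$
\pi(v) = (\psi_c(\bar w) - \bar w) + j + a,
$$
with $\norm{\bar w}_{\pi(S)} \leq C_1 \norm{\pi(v)}_{\pi(S)}$. Using \cref{prop:normPreservingLiftExt:lemma} twice, I first bound $\norm{\pi(v)}_{\pi(S)} \leq \norm{v}_S$, and then lift $\bar w$ to some $w \in G$ with $\norm{w}_S = \norm{\bar w}_{\pi(S)}$. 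Setting $v' = w \cdot v \cdot \psi(w)^{-1}$ gives a representative of $[v]_\psi$ whose image under $\pi$ is $\bar w + \pi(v) - \psi_c(\bar w) = j + a$, since $\pi\circ\psi = \psi_c\circ\pi$ by the definition of $\psi_c$; this is exactly the required form.

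The remaining task is the norm bound. By the triangle inequality, $\norm{v'}_S \leq \norm{w}_S + \norm{v}_S + \norm{\psi(w)}_S$. Since $\psi$ is a group automorphism, $\psi(S)$ is another finite generating set of $G$, and any $S$-word of length $n$ for $w$ pushes forward to a $\psi(S)$-word of length $n$ for $\psi(w)$, so $\norm{\psi(w)}_{\psi(S)} \leq \norm{w}_S$. Then \cref{prop:normAreEquiv} supplies a constant $L$ with $\norm{\psi(w)}_S \leq L \norm{\psi(w)}_{\psi(S)}$, and combining everything yields $\norm{v'}_S \leq C \norm{v}_S$ for some $C$ depending only on $\psi$, $S$ and the chosen triple.

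I do not expect any real obstacle here: the argument is essentially a bookkeeping exercise reducing the statement to \cref{prop:conjToCokernel:lemma}. The only slightly delicate point is tracking the norms through the projection $\pi$ and the conjugation $w \mapsto \psi(w)^{-1}$, both of which are handled by the preliminary lemmas of \cref{sec:prelim}.
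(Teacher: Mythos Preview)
Your proposal is correct and follows essentially the same route as the paper: apply \cref{prop:conjToCokernel:lemma} to $\pi(v)$ in $G/G_2$, lift the resulting $\bar w$ to $G$ via \cref{prop:normPreservingLiftExt:lemma}, set $v'=w\,v\,\psi(w)^{-1}$, and bound $\norm{v'}_S$ by the triangle inequality. The only cosmetic difference is that you control $\norm{\psi(w)}_S$ through the generating set $\psi(S)$ and \cref{prop:normAreEquiv}, whereas the paper simply invokes the operator norm $\norm{\psi}_S$; these are equivalent.
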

	\begin{proof}
		Let $\pi(S)=\set{\pi(s)\mid s\in S}$ be a generating set on $G/G'$. We may use Lemma \ref{prop:conjToCokernel:lemma}, to find some constant $C_0$ such that for any $v\in G/G_2$, there exists some $w\in G/G_2$ such that $\norm{w}_{\pi(S)}\leq C\norm{v}_{\pi(S)}$ and such that $w+v-\psi_c(w)$ is of the form $j+a$ with $j\in J$ and $a\in\mf A$.
		
		For $v'\in G$, we have that $\pi(v')\in G/G_2$ and thus can we find $w'$ in $G/G_2$ such that the above holds. By \cref{prop:normPreservingLiftExt:lemma}, there exists some $\overline{w'}\in G$ such that $\pi(\overline{w'})=w'$ and such that $\norm{\overline{w'}}_{S}=\norm{w'}_{\pi(S)}$. As $\pi$ is a group-morphism, we then have that $\pi(\overline{w'}v'\psi(\overline{w'}))$ is of the form $j+a$. With $j\in J$ and $a\in\mf A$. Furthermore, using the triangle inequality, $\overline{w'}v'\psi(\overline{w'})$ is of norm at most $\norm{v}_S(1+C_0+C_0\norm{\psi}_S)$.
	\end{proof}
	The above lemma tells that every element is $\psi$-conjugate to some other element $g$ with $gG_2\in J+\mf A$. The following lemma states that this representation is essentially unique up to $G_2$.
	\begin{lemma}\label{prop:conjDifferByI:lemma}
		Let $v_1,v_2$ be two $\psi$-conjugate elements of $G$. Then $\pi(v_1)-\pi(v_2)$ lies in the image of $\psi_c-\id$.
	\end{lemma}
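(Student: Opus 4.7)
The plan is to exploit the fact that $\pi\colon G\twoheadrightarrow G/G_2$ is a surjective group homomorphism onto an abelian group, and that $\psi$ descends to $\psi_c$ on $G/G_2$ by definition, so that $\pi\comp\psi=\psi_c\comp\pi$.

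Concretely, assume $v_1\sim_\psi v_2$, and pick $g\in G$ with $v_2=g\,v_1\,\psi(g)\inv$. Applying $\pi$ and switching to additive notation in $G/G_2$, one obtains
\[
\pi(v_2)=\pi(g)+\pi(v_1)+\pi(\psi(g)\inv)=\pi(v_1)+\pi(g)-\psi_c(\pi(g))=\pi(v_1)-(\psi_c-\id)(\pi(g)),
\]
where we used commutativity of $G/G_2$ to move $\pi(g)$ past $\pi(v_1)$ and then combine it with $-\psi_c(\pi(g))$. Rearranging, $\pi(v_1)-\pi(v_2)=(\psi_c-\id)(\pi(g))$, which lies in the image of $\psi_c-\id$, as desired.

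There is no real obstacle here: the lemma is essentially a one‑line computation once one passes to the abelianisation $G/G_2$, since twisted conjugation by $g$ collapses to translation by $(\id-\psi_c)(\pi(g))$ in the abelian quotient. The only point of care is that $\psi_c$ is well-defined (which uses $\psi(G_2)=G_2$, valid because $G_2$ is characteristic as the isolator of the derived subgroup) so that $\pi\comp\psi=\psi_c\comp\pi$ makes sense.
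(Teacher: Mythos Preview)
Your proof is correct and follows essentially the same one-line argument as the paper: apply $\pi$ to the relation $v_2=g\,v_1\,\psi(g)^{-1}$, use that $G/G_2$ is abelian and that $\pi\comp\psi=\psi_c\comp\pi$, and read off that $\pi(v_1)-\pi(v_2)=(\psi_c-\id)(\pi(g))$. Your extra remark that $\psi_c$ is well defined because $G_2$ is characteristic is accurate but already built into the paper's setup.
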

	\begin{proof}
		If $v_1$ and $v_2$ are $\psi$-conjugate, then there exists some $w$ such that $wv_1\psi(w)\inv=v_2$. In particular, after applying $\pi$, we obtain$$
		\pi(w)+\pi(v_1)-\psi_c\pi(w)=\pi(v_2)
		$$
		which can be rewritten as $$\pi(v_1)-\pi(v_2)=(\psi_c-\id)(-\pi(w).$$
	\end{proof}

	As a basic strategy we will attempt to count for each element $v$ in the set $J+\mf A$ how many $\psi$-conjugacy classes there are with a representative $v'\in G$ such that $\pi(v')=v$.
	To do this, we give a condition for when two elements with the same abelianization are $\psi$-conjugate.
	\begin{lemma}\label{prop:centerDifferenceInOmega:lemma}
		Let $(a,b),(a,b')\in G<H_\omega$ be two elements, then they are $\psi$-conjugate if and only if there exists some $v\in G/G'$ such that $\psi_c(v)=v$ and such that $2\omega(v,a)-\psi'(v)=b'-b$.
	\end{lemma}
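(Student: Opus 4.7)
The plan is to prove this by a direct computation carried out in $H_\omega$, using the reductions already available in the preceding paragraph. Recall that two elements of $G$ are $\psi$-conjugate in $G$ if and only if they are $\psi$-conjugate in $H_\omega$, and by \cref{prop:positiveDetCenterIrelivantExt:lemma} (which applies here because we are in the degenerate case, so $\psi\vert_{G_2}=\id$) we may restrict attention to conjugating elements of the form $(v,0)$ with $v\in G/G_2$. Since every such $v$ is the first coordinate of some element of $H_\omega$, the search for a conjugating element reduces to a search over $v\in G/G_2$.

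The key step is then to multiply out
$$
(v,0)\,(a,b)\,\psi\bigl((v,0)\bigr)^{-1}.
$$
In the degenerate case $\psi(v,0)=(\psi_c(v),\psi'(v))$, and its inverse in $H_\omega$ is $(-\psi_c(v),-\psi'(v))$ because $\omega(x,x)=0$. Carrying out the two multiplications via the formula $(a,c)*(a',c')=(a+a',c+c'+\omega(a,a'))$ yields, after collecting the $\omega$-terms,
$$
\bigl(v+a-\psi_c(v),\; b-\psi'(v)+\omega(v,a)-\omega(v,\psi_c(v))-\omega(a,\psi_c(v))\bigr).
$$

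For this to equal $(a,b')$, the first coordinate forces $\psi_c(v)=v$. Plugging this back into the second coordinate, the term $\omega(v,\psi_c(v))=\omega(v,v)$ vanishes, and $\omega(a,\psi_c(v))=\omega(a,v)=-\omega(v,a)$ by skew-symmetry, leaving
$$
b' - b = 2\omega(v,a)-\psi'(v),
$$
as claimed. Conversely, given any $v\in G/G_2$ satisfying both conditions, the element $(v,0)\in H_\omega$ conjugates $(a,b)$ to $(a,b')$, so the two elements are $\psi$-conjugate in $H_\omega$, and therefore in $G$.

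The computation is essentially routine once the correct conjugator $(v,0)$ has been identified; the main point to be careful about is the bookkeeping of $\omega$-terms produced by the two multiplications and the correct use of skew-symmetry to collapse $\omega(v,\psi_c(v))$ and to combine $\omega(v,a)$ with $-\omega(a,\psi_c(v))$ into the factor of $2\omega(v,a)$. No real obstacle arises: the only substantive input is that \cref{prop:positiveDetCenterIrelivantExt:lemma} lets us discard the $G_2$-component of the conjugator, together with the earlier observation that $\psi$-conjugacy in $G$ and in $H_\omega$ coincide.
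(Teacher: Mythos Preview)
Your proof is correct and follows essentially the same route as the paper: a direct computation of $(v,0)(a,b)\psi(v,0)^{-1}$ in $H_\omega$, matching first coordinates to force $\psi_c(v)=v$, and then simplifying the second coordinate via skew-symmetry. The only cosmetic difference is that the paper keeps a general conjugator $(v_1,v_2)$ and lets the $v_2$-terms cancel in the calculation, whereas you invoke \cref{prop:positiveDetCenterIrelivantExt:lemma} up front to set $v_2=0$; the computations are otherwise identical.
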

	\begin{proof}
		Let $v=(v_1,v_2)\in G$ be arbitrary, then $v(a,b)\psi(v)\inv$ is given by$$
		(a+v_1-{\psi_c}(v_1),b+\omega(v_1,a)+\omega(v_1,-{\psi_c}(v_1))+\omega(a,-{\psi_c}(v_1))-\psi'(v_1)).
		$$
		For the above to be equal to $(a,b')$, we need that $v_1=\psi(v_1)$. In this case the above reduces to$$
		(a,b+2\omega(v_1,a)-\psi'(v_1)).
		$$
		as had to be shown.
	\end{proof}
	
	As the above suggests, the maps $2\omega(v,\_)-\psi'(v):G/G'\rightarrow G'$ play a crucial role.
	
	\begin{definition}
	Let $I$ be the image of $\psi_z-\id$ and let $J<G/Z(G)$ be transversal to $I$. Let $K$ be the kernel of $\psi_z-\id$. Define then $$\omega_k:J\rightarrow \ZZ:j\mapsto \omega(k,j).$$
	\end{definition}
	
	\begin{lemma}\label{prop:kernelInjectsIntoDual:lemma}
		Let $I$ be the image of $\psi_z-\id$ and let $J<G/Z(G)$ be transversal to $I$. Let $K$ be the kernel of $\psi_z-\id$. Then the map $$K\rightarrow J^*:k\mapsto \omega_k(\_)$$
		is an injective linear map to the dual $J^*=\hom_\ZZ(J,\ZZ)$
	\end{lemma}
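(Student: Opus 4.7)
The plan is to establish linearity trivially and reduce injectivity to the non-degeneracy of $\omega$ on $G/Z(G)$. Linearity of $k\mapsto\omega_k$ is immediate: bilinearity of $\omega$ in its first slot gives $\omega_{k+k'}=\omega_k+\omega_{k'}$ and $\omega_{nk}=n\omega_k$.

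For injectivity, suppose $k\in K$ satisfies $\omega_k=0$, so that $\omega(k,j)=0$ for all $j\in J$. The goal is to show that $\omega(k,\cdot)$ vanishes on \emph{all} of $G/Z(G)$, since then non-degeneracy of $\omega$ on $G/Z(G)$ forces $k=0$. Non-degeneracy here is automatic: the radical of $\omega$ on $G/G_2$ is exactly $Z(G)/G_2$ by definition of the centre, so on the quotient $G/Z(G)$ the induced form has trivial radical as a $\ZZ$-bilinear form.

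The key step is to show $\omega(k,i)=0$ for every $i\in I=\mathrm{im}(\psi_z-\id)$. Here I will use the degenerate-case hypothesis: since $\psi\vert_{G_2}=\id$, the argument from \cref{sec:description} showing $\psi\vert_{G_2}(\omega(g,h))=\omega(\psi_c(g),\psi_c(h))$ forces $\psi_c$ (and hence its quotient $\psi_z$) to preserve $\omega$. Writing $i=\psi_z(v)-v$ and using $\psi_z(k)=k$, bilinearity gives
\[
\omega(k,\psi_z(v)-v)=\omega(\psi_z(k),\psi_z(v))-\omega(k,v)=\omega(k,v)-\omega(k,v)=0.
\]
Combined with the hypothesis $\omega(k,J)=0$, bilinearity yields $\omega(k,\cdot)=0$ on the finite-index subgroup $I+J\leq G/Z(G)$. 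Since $\omega$ takes values in the torsion-free group $\ZZ$, vanishing on a finite-index subgroup forces vanishing everywhere, and non-degeneracy concludes $k=0$.

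No step here is really a significant obstacle; the only thing to be careful about is ensuring that $\psi$ preserves $\omega$, which is exactly where the degenerate-case assumption $\psi\vert_{G_2}=\id$ is used. Everything else is a direct unwinding of the definitions combined with the standard fact that a $\ZZ$-linear map into $\ZZ$ vanishing on a finite-index subgroup vanishes identically.
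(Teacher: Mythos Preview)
Your proof is correct and follows essentially the same approach as the paper: establish linearity from bilinearity of $\omega$, then for injectivity show that $\omega_k$ vanishes on $I$ using $\psi_z(k)=k$ together with $\psi_z$-invariance of $\omega$, extend to $I+J$ and hence to all of $G/Z(G)$ by torsion-freeness of the target, and conclude by non-degeneracy. You are slightly more explicit than the paper in pointing out that $\psi_z$-invariance of $\omega$ uses the standing degenerate-case assumption $\psi\vert_{G_2}=\id$, but the argument is otherwise identical.
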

	\begin{proof}
		Linearity of this map is obvious.
		To show the map is injective, we need to show that the kernel is trivial, or thus that for $k\in K$, if $\omega_k(\_):J\rightarrow \ZZ:j\mapsto \omega(k,j)$ is trivial, then $k=0$. Notice that $\omega_k$ can be extended to a map with domain $G/Z$. On this domain, $\omega$ is non-degenerate, and thus is $\omega_k=0$ if and only if $k=0$. We will show that $\omega_k$ vanishes on $I$. Indeed every element in $I$ can be expressed as $\psi_z(w)-w$ where $w\in G/Z$. By bilinearity we have that $\omega(k,\psi_z(w)-w)=\omega(k,\psi_z(w))-\omega(k,w)$. As $k$ lies in the kernel of $\psi_z-\id$, the first term of the right hand side equal to $\omega(\psi_z(k),\psi_z(w))$. Furthermore as $\omega$ is $\psi_z$ invariant, we have that the second term also equals $\omega(\psi_z(k),\psi_z(w))$. It thus follows that $\omega_k$ vanishes on $I$.
		
		Suppose now that $\omega_k$ vanishes on $J$, then $\omega_k$ vanishes on the finite index subgroup $I+J$ of $G/Z$. As the codomain of $\omega$ is torsion free, it follows that $\omega_k$ vanishes on $M$ and thus, as $\omega$ is non-degenerate, that $k=0$.
		
	\end{proof}
	\begin{corollary}\label{prop:KMapsDim}
		Let $I$ be the image of $\psi_c-\id$ and let $J<G/G_2$ be transversal to $I$. Let $K$ be the kernel of $\psi_c-\id$, then the set $\{2\omega_k-\psi'(k)\mid k\in K\}$ forms a submodule of $\mathrm{Aff}(J,\ZZ)$ of dimension $\mf d_\psi$. Furthermore, this set contains no constant functions except the $0$ function.
	\end{corollary}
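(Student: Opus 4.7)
The plan is to show that the $\ZZ$-linear map $\Phi:K\to\mathrm{Aff}(J,\ZZ):k\mapsto 2\omega_k-\psi'(k)$ has kernel exactly $\ker(\psi_{z/c}-\id)$; the two claims of the corollary then fall out. Linearity of $\Phi$ is immediate from bilinearity of $\omega$ and linearity of $\psi'$, so the image is automatically a submodule of $\mathrm{Aff}(J,\ZZ)$. For the easy inclusion $\ker(\psi_{z/c}-\id)\subseteq\ker\Phi$, any $k\in\ker(\psi_{z/c}-\id)\subseteq Z(G)/G_2$ satisfies $\omega_k\equiv 0$ because $\omega$ vanishes as soon as one argument lies in $Z(G)/G_2$, and $\psi'(k)=0$ by the standing degeneracy hypothesis.

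For the reverse inclusion I would work a touch more generally and assume only that $\Phi(k)$ is constant on $J$. Evaluating at $0\in J$ gives $\Phi(k)(0)=-\psi'(k)$, so for any $j\in J$ we obtain $2\omega_k(j)=\Phi(k)(j)-\Phi(k)(0)$, which vanishes whenever $\Phi(k)$ is constant on $J$; thus $\omega_k\vert_J=0$. Write $\tilde k$ for the image of $k$ in $G/Z(G)$; since $\psi_c(k)=k$ we have $\tilde k\in\ker(\psi_z-\id)$. Copying the computation in the proof of \cref{prop:kernelInjectsIntoDual:lemma}, $\omega_{\tilde k}$ also vanishes on the image $I_z$ of $\psi_z-\id$. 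Since $\pi(J)+I_z=\pi(J+I)$ has finite index in $G/Z(G)$ and $\ZZ$ is torsion free, $\omega_{\tilde k}$ vanishes on all of $G/Z(G)$, and non-degeneracy of $\omega$ on $G/Z(G)$ then forces $\tilde k=0$. Hence $k\in K\cap(Z(G)/G_2)=\ker(\psi_{z/c}-\id)$, so by the degeneracy hypothesis $\psi'(k)=0$, and the original constant $\Phi(k)\equiv-\psi'(k)$ is zero.

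Combining both directions, $\Phi$ descends to an injection $K/\ker(\psi_{z/c}-\id)\hookrightarrow\mathrm{Aff}(J,\ZZ)$, whose image is a free submodule of rank $(d_c-r_c)-(d_{z/c}-r_{z/c})=\mf d_\psi$, giving the first assertion; the ``no nontrivial constants'' statement was obtained as a by-product of the second paragraph. The only delicate step is the reverse inclusion on the kernel, which ultimately reduces, via the same trick as in \cref{prop:kernelInjectsIntoDual:lemma}, to non-degeneracy of $\omega$ on $G/Z(G)$ combined with the transversality of $J$ to $I$.
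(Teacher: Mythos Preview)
Your argument is correct and follows essentially the same route as the paper: identify the kernel of $\Phi:k\mapsto 2\omega_k-\psi'(k)$ as $K\cap(Z(G)/G_2)=\ker(\psi_{z/c}-\id)$ via non-degeneracy of $\omega$ on $G/Z(G)$, and read off both the dimension and the ``no nontrivial constants'' claim from that. Your explicit passage from $J<G/G_2$ to $\pi(J)\subset G/Z(G)$ before invoking non-degeneracy is in fact slightly more careful than the paper, which cites \cref{prop:kernelInjectsIntoDual:lemma} directly despite the change of ambient module.
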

	\begin{proof}
		Notice that $2\omega_k-\psi'(k)$ is just trivial whenever $k\in Z(G)/G_2$, indeed in this case $\omega_k$ vanishes and thus by the degeneracy assumption, $\psi'(k)$ vanishes as well. The rank of this space is thus at most $\dim(K)-\dim(K\cap Z(G)/G_2)=(d_c-r_c)-(d_{z/c}-r_{z-c})$ dimensional. Conversely, if $k,k'\in K/ Z$ distinct, then by \cref{prop:kernelInjectsIntoDual:lemma}, we have that $2\omega_k-\psi'(k)$ and $2\omega_{k'}-\psi'(k')$ are distinct, affine maps. The rank of this space must thus also be at least $\dim(K/Z(G))=(d_c-r_c)-(d_{z/c}-r_{z-c})$. Furthermore by \cref{prop:kernelInjectsIntoDual:lemma}, if $2\omega_k-\psi'(k)$ is constant, then $k\in \ZZ$, and thus by degeneracy, $2\omega_k-\psi'(k)=0$.
	\end{proof}

	\begin{lemma}\label{prop:conjNumberOneForms:lemma}
		Let $I$ be the image of $\psi_c-\id$ and let $J<G/G_2$ be a transversal of $I$. For any $a\in G/G_2$, there exist some basis $\set{v_1,v_2,\cdots v_{d_c-r_c}}$ of $J$, together with some non-constant affine maps $\theta_i:\ZZ\rightarrow\ZZ:\lambda\mapsto x_i\lambda+y_i$ and a constant $D\in\NN_{>0}$, such that if $$\ggd(\theta_1(\lambda_1),\theta_2(\lambda_2),\cdots,\theta_{\mf d_\psi}(\lambda_{\mf d_\psi}))\mid c-c'$$ then the following are $\psi$-conjugate:$$(a+\sum_{j=1}^{d_c-r_c}\lambda_jv_j,c'),$$ $$(a+\sum_{j=1}^{d_c-r_c}\lambda_jv_j,c).$$ Conversely if these two are $\psi$-conjugate then $$\ggd(\theta_1(\lambda_1),\theta_2(\lambda_2),\cdots,\theta_{\mf d_\psi}(\lambda_{\mf d_\psi}))\mid D(c-c').$$
	\end{lemma}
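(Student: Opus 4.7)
The plan is to reformulate the $\psi$-conjugacy problem via \cref{prop:centerDifferenceInOmega:lemma} and then apply Smith normal form to the lattice of affine maps coming from \cref{prop:KMapsDim}. By \cref{prop:centerDifferenceInOmega:lemma}, the elements $(a+\sum_j\lambda_j v_j,c)$ and $(a+\sum_j\lambda_j v_j,c')$ are $\psi$-conjugate in $H_\omega$ if and only if $c-c'$ lies in the subgroup
\[
H(\vec\lambda)=\bigl\{\,2\omega(k,a)+\sum\nolimits_j 2\lambda_j\omega(k,v_j)-\psi'(k)\ :\ k\in K\,\bigr\}\subset\ZZ,
\]
with $K=\ker(\psi_c-\id)$; by the observation preceding \cref{prop:positiveDetCenterIrelivantExt:lemma}, conjugacy in $G$ and in $H_\omega$ agree on elements of $G$, so the task reduces to describing $H(\vec\lambda)$ as a principal ideal generated by a visible gcd.

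Next I would combine \cref{prop:KMapsDim} with Smith normal form. The map $K\to\mathrm{Aff}(J,\ZZ):k\mapsto 2\omega_k(\cdot)-\psi'(k)$ has image $W$ of rank $\mf d_\psi$ and kernel $K\cap Z(G)/G_2$ (using that a central $k\in K$ has $\omega_k=0$ and $\psi'(k)=0$ by degeneracy). Because $W$ contains no nonzero constant function, the linear-part projection $W\hookrightarrow J^*$ is injective and yields a rank-$\mf d_\psi$ sublattice $L\subset J^*$. Smith normal form applied to $L\subset J^*$ produces a basis $\{v_1^*,\ldots,v_{d_c-r_c}^*\}$ of $J^*$ together with positive integers $x_1,\ldots,x_{\mf d_\psi}$ such that $\{x_iv_i^*\}_{i=1}^{\mf d_\psi}$ is a basis of $L$; I take $\{v_1,\ldots,v_{d_c-r_c}\}$ to be the dual basis of $J$ and pick any lifts $k_i\in K$ with $2\omega(k_i,v_j)=x_i\delta_{ij}$.

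With these choices, evaluating the affine map from $k_i$ at $a+\sum_j\lambda_j v_j$ gives
\[
\theta_i(\lambda_i):=x_i\lambda_i+\bigl(2\omega(k_i,a)-\psi'(k_i)\bigr),
\]
which is non-constant since $x_i>0$. The forward direction is then immediate: if the gcd of the $\theta_i(\lambda_i)$ divides $c-c'$, then $c-c'\in\langle\theta_1(\lambda_1),\ldots,\theta_{\mf d_\psi}(\lambda_{\mf d_\psi})\rangle\subset H(\vec\lambda)$, so \cref{prop:centerDifferenceInOmega:lemma} produces the conjugacy.

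For the converse set $D=[K:\langle k_1,\ldots,k_{\mf d_\psi}\rangle+(K\cap Z(G)/G_2)]$, which is finite and depends only on the Smith-normal-form data (not on $a$ or $\vec\lambda$), since the listed subgroup has full rank $d_c-r_c$ in $K$. For every $v\in K$, $Dv$ decomposes as an integer combination of the $k_i$ plus a central piece on which $2\omega_\bullet(\cdot)-\psi'(\bullet)$ vanishes; hence $D\cdot(2\omega(v,a+\sum_j\lambda_jv_j)-\psi'(v))\in\langle\theta_i(\lambda_i)\rangle$. Consequently $D\cdot H(\vec\lambda)\subset\langle\theta_i(\lambda_i)\rangle$, so if the two elements are $\psi$-conjugate then the gcd divides $D(c-c')$. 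The main delicacy is the linear-versus-constant bookkeeping inside $\mathrm{Aff}(J,\ZZ)$: Smith normal form only diagonalises the linear parts, so one needs the ``no nonzero constants'' property of \cref{prop:KMapsDim} to lift the diagonalisation to honest $k_i\in K$, after which the constant $D$ emerges automatically from the index of the lift.
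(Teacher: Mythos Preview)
Your proof is correct and follows essentially the same strategy as the paper: reduce to \cref{prop:centerDifferenceInOmega:lemma}, identify the subgroup $H(\vec\lambda)$ with the values of the affine maps $2\omega_k(\cdot)-\psi'(k)$ coming from \cref{prop:KMapsDim}, and then choose a basis of $J$ adapted to the linear parts so that each $\theta_i$ depends on a single coordinate $\lambda_i$.

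The one genuine difference is your use of Smith normal form. The paper fixes an arbitrary basis of a complement $J_1$ to the common kernel $J_0$, then merely picks multiples $\delta_i v_i^*\in 2\mathfrak K$; these need not generate $2\mathfrak K$, and the paper's constant $D$ is the resulting index $[2\mathfrak K:\langle\delta_i v_i^*\rangle]$. Your Smith-adapted basis makes $\{x_i v_i^*\}$ an honest basis of $L=2\mathfrak K$, and since the map $K\to L$, $k\mapsto 2\omega_k|_J$, is surjective with kernel exactly $K\cap Z(G)/G_2$ (which is isolated in $K$), your lifts satisfy $\langle k_1,\dots,k_{\mathfrak d_\psi}\rangle+(K\cap Z(G)/G_2)=K$. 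In other words your index $D$ is always $1$: you have in fact proved the sharper two-sided statement $\gcd(\theta_i(\lambda_i))\mid c-c'$ \emph{iff} the two elements are $\psi$-conjugate. This is a mild improvement over the paper's formulation, though for the growth estimates in \cref{sec:degen} any fixed $D$ suffices.
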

	\begin{proof}
				
		Let $K$ be the kernel of $\psi_c-\id$ and let $\mf K<J^*$ be the subspace $\{\omega_k\mid k\in K\}$. Let $J_0<J$ be the common kernel of $\mf K$. By \cref{prop:KMapsDim}, we have that the dimension of $\mf K$ is precisely $\mf d_\psi$, and thus that the dimension of $J_0$ is precisely $d_c-r_c-\mf 
		d_\psi=d_{z/c}-r_{z/c}$.
		Notice that $J_0$ is isolated in $J$, we can thus find another submodule $J_1<J$ such that $J=J_0\oplus J_1$.
		Fix $\{v_1,\cdots, v_{\mf d_\psi}\}$ a basis of $J_1$ and $\{v_{\mf d_\psi+1},\cdots v_{d_c-r_c}\}$ a basis of $J_0$.

		$\set{v_1,v_2,\cdots v_{\mf d_\psi}}$ also induces a dual basis $\set{v_1^*,v_2^*,\cdots v_{\mf d_\psi}^*}$ of $J_1^*$. As $\mf K$ acts trivially on $J_0$, we can see $\mf K$ as a subspace of $J_1^*$. Notice that $\mf K$ and $J_1^*$ have the same dimension. It follows that $\mf K$ is a finite-index submodule of $J_1^*$. In particular we may find $\delta_i>0$ such that $\delta_iv_i^*\in 2\mf K$. Now the subspace generated by elements of the form $\delta_iv_i^*$  is finite index in $2\mf K$, call this index $D$. As $\delta_iv_i^*$ lies in $2\mf K$, there exist some $k_i\in \mf K$ such that $2\omega_{k_i}=\delta_iv_i^*$, define then $$\theta_i:\ZZ\rightarrow\ZZ:\lambda\mapsto2\omega(k_i,a+\lambda v_i)-\psi'(k_i).$$
		Notice that as $2\omega_{k_i}=\delta_iv_i^*$, the map $\theta_i$ is non-constant. We will demonstrate that the result holds for the above choices of $\set{v_1,v_2,\cdots v_{d_c-r_c}}$, $\{\theta_1,\cdots,\theta_{\mf d_\psi}\}$ and $D$.
		
		First suppose $\ggd(\theta_1(\lambda_1),\theta_2(\lambda_2),\cdots,\theta_{\mf d_\psi}(\lambda_{\mf d_\psi}))\mid c-c'$. By Bachet-Bézout, there exist constants $w_i\in\ZZ$ such that $c-c'=\sum w_i\theta_i(\lambda_i)$. Using the definition of $\theta_i$, it follows that $c-c'$ can be written as$$
		c-c'=\sum_{i=1}^{\mf d_\psi} w_i(2\omega(k_i,a+\lambda_iv_i)-\psi'(k_i)).
		$$
		Remember that $2\omega_{k_i}=\delta_iv_i^*$, it follows that whenever $j\neq i$, that $\omega(k_i,v_j)=0$. Using this, we have the following:$$
		c-c'=\sum_{i=1}^{\mf d_\psi} w_i(2\omega(k_i,a+\sum_{j=1}^{d_c-r_c}\lambda_jv_j)+\psi'(k_i)),
		$$
		which using linearity of $\psi'$ and bilinearity of $\omega$, can be rewritten as $$
		c-c'=2\omega(\sum_{i=1}^{\mf d_\psi} w_i k_i,a+\sum_{j=1}^{d_c-r_c}\lambda_jv_j)-\psi(\sum_{i=1}^{\mf d_\psi} w_i k_i).
		$$
		By Lemma \ref{prop:centerDifferenceInOmega:lemma}, it follows that $(a+\sum_{j=1}^{d_c-r_c}\lambda_jv_j,c)$ and $(a+\sum_{j=1}^{d_c-r_c}\lambda_jv_j,c')$ are $\psi$-conjugate.
		
		For the converse, assume that $(a+\sum_{j=1}^{d_c-r_c}\lambda_jv_j,c)$ and $(a+\sum_{j=1}^{d_c-r_c}\lambda_jv_j,c')$ are $\psi$-conjugate. Again by \cref{prop:centerDifferenceInOmega:lemma} there exists some $k\in K$ such that 
		$$
		c-c'=2\omega(k,a+\sum_{j=1}^{d_c-r_c}\lambda_jv_j)-\psi'(k).
		$$
		We may multiply the left and right hand side with $D$:$$
		D(c-c')=2\omega(Dk,a+\sum_{j=1}^{d_c-r_c}\lambda_jv_j)-\psi'(Dk).
		$$
		As $D$ is the index of $\langle\delta_1v_1^*,\cdots\delta_{\mf d_\psi}v_{\mf d_\psi}^*\rangle$ in $2\mf K$, there exist constants $w_i\in\ZZ$ such that $\omega_{Dk}=\sum w_i\delta_iv_i^*$. Or thus that $Dk=k_0+\sum_{i=1}^{\mf d} w_i k_i$, where $k_0\in K\cap Z(G)/G_2$.
		Using that $\psi'(k_0)=0$, this allows us to rewrite the above as:$$
		D(c-c')=\sum_{i=1}^{\mf d_\psi} w_i(2\omega(k_i,a+\sum_{j=1}^{d_c-r_c}\lambda_jv_j )-\psi'(k_i)).
		$$
		As $2\omega_{k_i}=\delta_iv_i^*$, we have that for $j\neq i$ that $2\omega(k_i,v_j)=0$. Allowing us to rewrite the above as $$
		D(c-c')=\sum_{i=1}^{\mf d_\psi} w_i (2\omega(k_i,a+\lambda_i v_i))-\psi(k_i)=\sum_{i=1}^{\mf d_\psi} w_i\theta_i(\lambda_i).
		$$
		It follows that $D(c-c')$ is divided by the greatest common divisor of the integers $\theta_i(\lambda_i)$
	\end{proof}

\section{Number Theoretic Estimates}\label{sec:numbertheory}

In this section, we estimate the following series.

\begin{lemma}\label{prop:sumofggd:lemma}
	Let $0<\mf d\leq l$ be integers.
	Let $\theta_i$ be fixed maps of the form $\theta_i:\ZZ\rightarrow\ZZ:x\mapsto a_ix+b_i$ where $a_i\neq 0$ and $b_i$ are integers and $i\in\range{\mf d}$, then $$
	\sum_{\lambda_1=-N}^N\sum_{\lambda_2=-N}^N\cdots\sum_{\lambda_{l}=-N}^N \ggd(\theta_1(\lambda_1),\theta_2(\lambda_2),\cdots,\theta_{\mf d}(\lambda_{\mf d}))\simeq N^ld_\mf d(N)
	$$
	where $d_\mf d$ is given by:$$
	d_\mf d:\NN_{>0}\rightarrow\RR^+_0:N\mapsto\begin{cases}
		N &\mf d=1\\
		\ln(N)&\mf d=2\\
		1&\mf d\geq 3.
	\end{cases}
	$$
\end{lemma}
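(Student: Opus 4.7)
The plan is to first observe that the gcd depends only on $\lambda_1, \ldots, \lambda_{\mf d}$, so summing the remaining $l - \mf d$ indices freely contributes a factor $(2N+1)^{l-\mf d} \simeq N^{l-\mf d}$, reducing the problem to proving
$$ S_N := \sum_{\lambda_1, \ldots, \lambda_{\mf d} = -N}^{N} \ggd(\theta_1(\lambda_1), \ldots, \theta_{\mf d}(\lambda_{\mf d})) \simeq N^{\mf d}\, d_{\mf d}(N). $$
The main tool is the Jordan-totient identity $\ggd(m_1, \ldots, m_{\mf d}) = \sum_{d \mid \ggd(m_i)} \varphi(d)$, which after swapping the order of summation gives
$$ S_N = \sum_{d \geq 1} \varphi(d) \prod_{i=1}^{\mf d} N_i(d, N), $$
where $N_i(d, N) := \#\{\lambda \in [-N,N] : d \mid a_i \lambda + b_i\}$. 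A standard count of residue solutions to $a_i \lambda \equiv -b_i \pmod d$ gives $N_i(d, N) = \ggd(a_i, d)\bigl(2N/d + O(1)\bigr)$ when $\ggd(a_i, d) \mid b_i$, and $N_i(d, N) = 0$ otherwise; since the $a_i$ are fixed, $\ggd(a_i, d) \leq |a_i|$ is bounded.

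For the upper bound, the outer sum is effectively truncated at $d \leq CN$ because $d$ must divide some $|\theta_i(\lambda_i)| \leq (|a_i| + |b_i|)N$. Combining $N_i(d, N) \lesssim N/d$ with $\varphi(d) \leq d$ yields
$$ S_N \lesssim N^{\mf d} \sum_{d=1}^{CN} d^{1 - \mf d}, $$
which evaluates to $O(N^{\mf d+1})$, $O(N^{\mf d} \ln N)$ and $O(N^{\mf d})$ for $\mf d = 1, 2, \geq 3$ respectively, matching the three branches of $d_{\mf d}$. For the lower bound, when $\mf d \geq 3$ the trivial $\ggd \geq 1$ gives $S_N \geq (2N+1)^{\mf d} \gtrsim N^{\mf d}$, and when $\mf d = 1$ one bounds $S_N = \sum_\lambda |a_1 \lambda + b_1| \gtrsim N^2$ directly.

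The only delicate case is $\mf d = 2$: I plan to restrict the outer sum to $d \leq N/2$ coprime to $a_1 a_2$, on which the divisibility obstruction $\ggd(a_i,d) \mid b_i$ is automatic and $N_i(d, N) \geq N/(2d)$. This gives
$$ S_N \gtrsim N^2 \sum_{\substack{d \leq N/2 \\ \ggd(d,\, a_1 a_2) = 1}} \frac{\varphi(d)}{d^2}, $$
and a standard Euler-product / Dirichlet-series argument (adapting $\sum_{d \leq N} \varphi(d)/d^2 \sim (6/\pi^2) \ln N$ to the coprimality restriction) shows this last sum is $\gtrsim \ln N$, concluding $S_N \gtrsim N^2 \ln N$. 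The main obstacle is precisely this $\mf d = 2$ lower bound, which is the only step requiring nontrivial analytic number theory; all the other estimates reduce to summing the geometric-like series $\sum d^{1-\mf d}$.
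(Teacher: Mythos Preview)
Your proposal is correct and takes essentially the same approach as the paper: reduce to $l=\mf d$, apply the identity $\gcd=\sum_{d\mid\gcd}\varphi(d)$ and swap the order of summation, handle $\mf d=1$ and the $\mf d\geq 3$ lower bound directly, and for the $\mf d=2$ lower bound restrict to $d$ coprime to the slopes so that the problem reduces to $\sum_{\gcd(d,m)=1,\ d\leq N}\varphi(d)/d^{2}\simeq\ln N$ (which is exactly the paper's \cref{prop:seriesEstimate2:lemma}). The only cosmetic difference is that for the upper bound the paper first majorises each $\theta_i(\lambda_i)$ by a free variable in $[-N',N']$ before counting, whereas you count $N_i(d,N)$ directly; both lead to the same $\varphi(d)/d^{\mf d}$ series.
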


One function that that pops up naturally while doing these manipulations is \emph{Euler's Totient function} which we will denote by $\varphi_e$. Remember that for a strictly positive natural number $n$, $\varphi_e(n)$ denotes the number of integers in $[1,n]$ that are relatively prime to $n$.
It is a well known fact that the following equality holds:$$
n=\sum_{d\mid n}\varphi_e(d).
$$
Also well known is that$$
\varphi_e(p^n)=(p-1)p^{n-1}
$$
whenever $p$ is prime and $n>1$, and that for $m$ and $n$ coprime we have $$
\varphi_e(mn)=\varphi_e(m)\varphi_e(n).
$$
We will mostly be interested in the assymptotic behaviour of the series
$$
S_{\mf d,m}(n)=\sum_{\substack{0<i<n\\\gcd(i,m)=1}}\frac{\varphi_e(i)}{i^\mf d}
$$
where $\mf d$ and $m$ are fixed positive integers.
We first resolve the case where $\mf d=2$.
\begin{lemma}\label{prop:seriesEstimate2:lemma}
	Let $m$ be a positive integer, then $$S_{2,m}(n)\simeq \ln (n).$$
\end{lemma}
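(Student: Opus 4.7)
The upper bound is immediate: since $\varphi_e(i)\le i$, we have $\varphi_e(i)/i^2\le 1/i$, so $S_{2,m}(n)\le \sum_{i=1}^{n-1}1/i\le 1+\ln n$ and thus $S_{2,m}(n)\prec \ln n$.

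For the lower bound $S_{2,m}(n)\succeq \ln n$, the plan is a dyadic partition of $(0,n]$ into blocks $I_k=(n/2^{k+1},n/2^k]$ for $k=0,1,\ldots,\lfloor\log_2 n\rfloor$, combined with the quadratic estimate
\[
\Phi_m(N)\;:=\;\sum_{\substack{0<i\le N\\ \gcd(i,m)=1}}\varphi_e(i)\;\simeq\; N^2.
\]
Granting this estimate, on each $I_k$ we have $1/i^2\ge 4^k/n^2$, so the contribution of $I_k$ to $S_{2,m}(n)$ is at least
\[
\frac{4^k}{n^2}\bigl(\Phi_m(n/2^k)-\Phi_m(n/2^{k+1})\bigr),
\]
which is bounded below by a positive constant depending only on $m$. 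Summing over the $\Theta(\log n)$ dyadic blocks then yields $S_{2,m}(n)\succeq \ln n$.

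To establish $\Phi_m(N)\simeq N^2$, I plan to apply Möbius inversion via the identity $\varphi_e(i)=\sum_{d\mid i}\mu(d)(i/d)$. Since every divisor of an integer coprime to $m$ is itself coprime to $m$, swapping the order of summation gives
\[
\Phi_m(N)\;=\;\sum_{\substack{d\le N\\ \gcd(d,m)=1}}\mu(d)\sum_{\substack{k\le N/d\\ \gcd(k,m)=1}} k.
\]
The inner sum evaluates to $\tfrac{1}{2}(\varphi_e(m)/m)(N/d)^2+O(N/d)$ by the standard density estimate for integers coprime to $m$ in an interval, and the tail $\sum_{\gcd(d,m)=1}\mu(d)/d^2$ converges to $\zeta(2)^{-1}\prod_{p\mid m}(1-p^{-2})^{-1}>0$ (the usual Euler product, restricted to primes not dividing $m$). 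Combining these, $\Phi_m(N)=c_m N^2+O(N\log N)$ for some $c_m>0$, delivering the desired quadratic bounds.

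The main obstacle is essentially the bookkeeping of error terms, so that the lower bound $\Phi_m(N)\ge (c_m/2)N^2$ holds effectively once $N$ exceeds an explicit threshold depending on $m$, rather than merely in the limit; however, every error term is controlled in terms of $m$, and the positivity of the leading constant $c_m$ is manifest from the Euler product. Once the effective quadratic bound is in hand, the dyadic argument in the second paragraph delivers the matching $\ln n$ lower bound and completes the proof.
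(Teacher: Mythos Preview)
Your argument is correct and takes a genuinely different route from the paper. The paper treats the case $m=1$ as classical (citing Abel summation together with $\sum_{i\le n}\varphi_e(i)=\tfrac{3}{\pi^2}n^2+O(n\log n)$) and then reduces general $m$ to $m=1$: the upper bound is the trivial inclusion $S_{2,m}(n)\le S_{2,1}(n)$, while for the lower bound it multiplies and divides by the finite Euler product $c_m=\prod_{p\mid m}\sum_{e\ge 1}\tfrac{p-1}{p^{e+1}}$ and rearranges the resulting double sum to show $S_{2,m}(n)\ge c_m^{-1}S_{2,1}(n)$. You instead bypass any reduction to $m=1$: your upper bound uses only $\varphi_e(i)\le i$ and the harmonic series, and your lower bound proves the coprime-restricted totient sum $\Phi_m(N)\sim c_m N^2$ directly via M\"obius inversion, then feeds this into a dyadic decomposition. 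Your approach is more self-contained (no appeal to the classical $m=1$ asymptotic), while the paper's Euler-product comparison is slicker once that case is granted. One small point worth making explicit in your write-up: the dyadic blocks only contribute a uniform positive constant once $n/2^{k+1}$ exceeds the threshold $N_0$ at which $\Phi_m(N)\ge (c_m/2)N^2$ kicks in; restricting to $k\le \log_2(n/(2N_0))$ still leaves $\Theta(\log n)$ blocks, so the conclusion is unaffected.
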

\begin{proof}
	In case $m=1$, this is a well known result that follows for instance from Abel's summation formula, combined with the fact that $$\sum_{i\leq n}\varphi_e(i)= \frac{3n^2}{\pi^2}+ O(n\log n).
	$$
	We prove here that varying $m$ only changes the result up to ``$\simeq$''.
	First notice that $$S_{2,m}(n)\leq S_{2,1}(n).$$ We thus need to show that the other inequality holds up to a constant factor.
	Let $(p_i)$ be the sequence of primes, and let $\chi_{\leq n}$ be the indicator function, returning $1$ on an input less or equal to $n$, and returning $0$ otherwise. Factoring $i$, we may rewrite $S_{2,m}(n)$ as:$$
	\sum_{\substack{(e_j)\in \ZZ_{\geq 0}^\infty\\ p_j\mid m\Rightarrow e_j=0}}    \frac{\varphi_e(\prod p_j^{e_j})}{\prod p_j^{2e_j}}\chi_{\leq n}(\prod p_j^{e_j}).
	$$
	Using the definition of $\varphi_e$, we may again rewrite this as
	$$
	\sum_{\substack{(e_j)\in \NN^\infty\\ p_j\mid m\Rightarrow e_j=0}}    \prod_{\substack{j\in\NN\\e_j\geq 1}}\frac{p_j-1}{p_j^{e_j+1}}\chi_{\leq n}(\prod p_j^{e_j}).
	$$
	
	For $p$ a fixed prime number, notice that $$\sum_{e_p\in\NN}\frac{p-1}{p^{e_p+1}}$$ is a geometric series and as such converges to a non-zero constant $c_p$,
	let $c_m=\prod_{p\mid m} c_p$. We may multiply the original expression with $\frac{c_m}{c_m}$ without changing the result. Doing this returns $$
	\frac{1}{c_m} \left(\sum_{\substack{(e'_j)\in\NN^\infty\\p_j\nmid m\Rightarrow e'_j=0}}\prod_{\substack{j\in\NN\\e'_j\geq 1}}\frac{p_j-1}{p_j^{e'_j+1}}\right)\left(\sum_{\substack{(e_j)\in \NN^\infty\\ p_j\mid m\Rightarrow e_j=0}}    \prod_{e_j\geq 1}\frac{p_j-1}{p_j^{e_j+1}}\chi_{\leq n}\left(\prod_{e_j\geq 1} p_j^{e_j}\right)\right).$$
	Merging these two sums into one\footnote[2]{
		This is valid as the left sum is convergent, and the right sum only takes finitely many non-$0$ values.
	}, we obtain$$
	S_{2,m}(n)=\frac{1}{c_m}\sum_{\substack{(e_j)\in \NN^\infty}}\prod_{\substack{j\in\NN\\e_j\geq 1}}\frac{p_j-1}{p_j^{e_j+1}}\chi_{\leq n}\left(\prod_{\substack{j\in\NN\\\ggd(p_j,m)=1}}p_j^{e_i}\right).
	$$
	If we increase the argument of the function $\chi_{\leq n}$, then the total must decrease, we thus obtain that$$
	S_{2,m}(n)\geq \frac{1}{c_m}\sum_{\substack{(e_j)\in \NN^\infty}}\prod_{\substack{j\in\NN\\e_j\geq 1}}\frac{p_j-1}{p_j^{e_j+1}}\chi_{\leq n}\left(\prod_{\substack{j\in\NN\\\ggd(p_j,1)=1}}p_j^{e_j}\right).
	$$
	We thus have that $$
	S_{2,m}(n)\geq \frac{1}{c_m}\sum_{\substack{i<n\\\gcd(i,1)=1}}\frac{\varphi_e(i)}{i^2}=\frac{1}{c_m}S_{2,1}(n).
	$$
\end{proof}
The above works when the the exponent in the denominator is $2$. If the exponent is larger, then we have the following instead.
\begin{lemma}\label{prop:seriesEstimate3:lemma}
	Let $\mf d>2$ and let $m>0$ be a positive integer, then
	$$S_{\mf d,m}(n)$$ is bounded above by a constant.
\end{lemma}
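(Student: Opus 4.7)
The plan is to bound the summands directly using the elementary inequality $\varphi_e(i) \leq i$ and then recognise the resulting series as a convergent $p$-series.

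First, I observe that since $\varphi_e(i)$ counts integers in $[1,i]$ coprime to $i$, we trivially have $\varphi_e(i) \leq i$. Hence each term satisfies
$$
\frac{\varphi_e(i)}{i^{\mathfrak{d}}} \leq \frac{i}{i^{\mathfrak{d}}} = \frac{1}{i^{\mathfrak{d}-1}}.
$$
Therefore I can estimate
$$
S_{\mathfrak{d},m}(n) = \sum_{\substack{0 < i < n \\ \gcd(i,m) = 1}} \frac{\varphi_e(i)}{i^{\mathfrak{d}}} \leq \sum_{i=1}^{\infty} \frac{1}{i^{\mathfrak{d}-1}}.
$$

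Since by hypothesis $\mathfrak{d} > 2$, the exponent satisfies $\mathfrak{d} - 1 > 1$, so the right hand side is a convergent $p$-series (equal to $\zeta(\mathfrak{d}-1)$). It is in particular a finite constant independent of both $n$ and $m$. This gives the desired uniform bound, and there is really no obstacle here beyond noting that the coprimality restriction only makes the sum smaller, so it can simply be dropped. In particular the bound obtained does not depend on $m$ at all, which will be convenient when this lemma is applied later in \cref{sec:degen}.
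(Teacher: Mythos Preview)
Your proof is correct and matches the paper's argument exactly: bound $\varphi_e(i)\leq i$ and compare with the convergent hyperharmonic series $\sum_i 1/i^{\mf d-1}$. Your write-up is in fact slightly more explicit than the paper's (you spell out why the coprimality condition can be dropped and that the bound is uniform in $m$), but the method is identical.
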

\begin{proof}
	Notice that $\varphi_e(i)\leq i.$ The given function is thus bounded above by $\sum_{i\leq \NN}\frac{1}{i^{k-1}}$. This is a hyperharmonic series, and thus a convergent series.
\end{proof}

\begin{proof}[proof of \cref{prop:sumofggd:lemma}]
	Notice that $$
	\sum_{\lambda_{\mf d+1}=-N}^N\cdots\sum_{\lambda_{l}=-N}^N \ggd(\theta_1(\lambda_1),\theta_2(\lambda_2),\cdots,\theta_{\mf d}(\lambda_{\mf d}))
	$$
	is precisely equal to $$(2N+1)^{l-\mf d}\ggd(\theta_1(\lambda_1),\theta_2(\lambda_2),\cdots,\theta_{\mf d}(\lambda_{\mf d})).$$
	We may factor out this $(2N+1)^{l-\mf d}$, allowing us to reduce to the case where $l=\mf d$.

	First we demonstrate an upper bound.
	Let $N'$ be an upper bound in absolute value, of the values $\theta_i$ take on $[-N,N]$.  Then we can bound above sum above by:\begin{equation}\label{eq:sumofggd}
		\sum_{\lambda_1=-N'}^{N'}\sum_{\lambda_2=-N'}^{N'}\cdots\sum_{\lambda_{\mf d}=-N'}^{N'} \ggd(\lambda_1,\lambda_2,\cdots,\lambda_{\mf d}).
	\end{equation}
	The estimate of the above sum depends on $\mf d$: if $\mf d=1$, then this becomes $\sum_{\lambda=-N'}^{N'}\abs{\lambda}$ which is bounded above by $N'(N'+1)$. As $N'$ is linear in $N$, we obtain in this case that the expression is, up to a factor, bounded by $N^{2}$.
	Now assume that $\mf d\geq 2$. We show that this sum is bounded above by the following:$$
	\sum_{i\in\NN_{>0}} \left(\left(2\left\lfloor\frac{N'}{i}\right\rfloor+1\right)^{\mf d}-1\right)\varphi_e(i).
	$$
	
	Indeed, notice that for $j\geq 1$, $j=\sum_{i\mid j}\varphi_e(i)$. Thus the sum (\ref{eq:sumofggd}), is thus equal to the sum over all possible integers $i$, of $\varphi_e(i)$ times the number of times in the given range $i$ divides $\ggd(\lambda_1,\lambda_2,\cdots,\lambda_{\mf d})$. This last condition is fulfilled precisely when $i$ divides both $\lambda_1,\lambda_2,\cdots,\lambda_{\mf d}$, and this thus happens precisely $\left(\left(2\lfloor\frac{N'}{i}\rfloor+1\right)^{\mf d}-1\right)$ times (where the $-1$ accounts for ${\lambda_i=0}$).
	We thus have that (\ref{eq:sumofggd}) is bounded above by $$\sum_{i\in\NN_{>0}} \left(\left(2\left\lfloor\frac{N'}{i}\right\rfloor+1\right)^{\mf d}-1\right)\varphi_e(i).$$
	To estimate this from above, notice that when $i>N'$, then the corresponding argument vanishes. We thus have an upper bound$$
	\sum_{i\leq N'} (2\frac{N'}{i}+1)^\mf d\varphi_e(i)
	$$
	which is in turn bounded above by$$
	(4N')^\mf d S_{\mf d,1}(N').
	$$
	By \Cref{prop:seriesEstimate3:lemma} or by \Cref{prop:seriesEstimate2:lemma}, this is bounded above, up to ``$\prec$'', by $N'^2\ln(N')$ when $\mf d=2$ or by $N'^{\mf d}$ when $\mf d\geq 3$. The upper bound follows as $N'$ is at most linear in $N$.
	
	Now to demonstrate the lower bound:
	
	Notice that the functions $\theta_i$ are non-constant and affine, and thus vanish for at most $1$ value of $\lambda_i$.
	If $\mf d\geq 3$, we may estimate from below by replacing the summands of \ref{eq:sumofggd} by $1$ if it is larger than $1$, and retain the $0$ otherwise. This sum thus is bounded below by $N^{\mf d}-1\simeq N^{\mf d}$. In this case, we thus have the desired lower bound.
	
	When $\mf d=1$, we obtain the following sum$$
	\sum_{\lambda=-N}^N\abs{\theta_1(\lambda)}.
	$$
	This is bounded below by $\frac{N^2}{2}$
	
	Only the case where $\mf d=2$ remains.
	
	Without loss of generality assume $\theta_i$ only vanishes for a negative value of $\lambda_i$, replacing $\theta_i(\lambda_i)$ by $\theta_i(-\lambda_i)$ if necessary. We estimate the sum from below by$$
	\sum_{\lambda_1=1}^N\sum_{\lambda_{2}=1}^N \ggd(\theta_1(\lambda_1),\theta_2(\lambda_2)).
	$$
	
	For $j$ a strictly positive integer, we again have $$j=\sum_{\substack{k\in\NN_{>0}\\k\mid j}}\varphi_e(k).$$
	We may thus rewrite the previous sum as$$
	\sum_{k\in\NN_{>0}}\varphi_e(k) \sum_{\lambda_1=1}^N\sum_{\lambda_{2}=1}^N \chi_{k\mid}(\ggd(\theta_1(\lambda_1),\theta_2(\lambda_2)))
	$$
	where $\chi_{k\mid}(x)$ is the indicator function returning $1$ if $k\mid x$ and $0$ otherwise.
	To compute this sum, we count for a given integer $k$, the number of integers $j\in[0,N]$ such that $k\mid \theta_i(j)$. 
	Assume $\ggd(k,a_i)=1$, then $a_i$ is a generator of $\frac{\ZZ}{k\ZZ}$ and thus for some value of $j\in[1,k]$, $k$ must divide $\theta_i(j)$. Similarly this holds for some value $j\in[k+1,2k]$, for some value $j\in[2k+1,3k]$,$\cdots$.
	Whenever $\ggd(k,a_i)=1$, there are thus at least $\lfloor\frac{N}{k}\rfloor$ choices of $j\in[1,N]$ such that this happens.
	Let $a$ now be the least common multiple of the slopes $\set{a_1,a_2}$ and let $\ggd(k,a)=1$, then there are at least $\lfloor\frac{N}{k}\rfloor^2$ choices of pairs $\lambda_1,\lambda_2$, such that $k$ divides both $\theta_1(\lambda_1)$ and $\theta_2(\lambda_2)$.
	The sum from before, can thus be estimated from below by$$
	\sum_{\substack{k\in\NN_{>0}\\\ggd(k,m)=1}}\left\lfloor\frac{N}{k}\right\rfloor^2\varphi_e(k).
	$$
	If $k>N$, then the summand vanishes, on the other hand, if $k\leq N$, then $\floor{\frac{N}{k}}\geq \frac{N}{2k}$, we thus obtain a final lower bound of$$
	\frac{1}{4}N^2S_{2,m}(N),
	$$
	which by Lemma \ref{prop:seriesEstimate2:lemma} is equivalent to $N^2\ln(N)$.
\end{proof}

\section{Proof of the degenerate case}\label{sec:degen}

	With this we complete the proof of \Cref{prop:completemaintheorem}.
		
	\begin{customthm}{B.2}\label{prop:maintheoremdegen}
		\sloppy
		Let $G$ and $\psi$ be as before, suppose that $\psi\vert_{G_2}$ is the identity and that ${\psi'(\ker(\psi_{z/c}-\id))}$ is trivial, then $$\gr_{\psi,G}(n)\simeq n^{d_c-r_c}d_{\psi}(n)$$ where $d_\psi$ is given by$$\begin{cases}
			\text{$d(n)=1$ if $\mf d_\psi\geq 3$;}\\
			\text{$d(n)=\log(n)$ if $\mf d_\psi=2$;}\\
			\text{$d(n)=n$ if $\mf d_\psi=1$;}\\
			\text{$d(n)=n^2$ if $\mf d_\psi=0$.}
		\end{cases}$$
	\end{customthm}	
	\begin{proof}
		Fix a generating set $S$ on $G$, containing a generator of $G_2$. 
		As before let $I$ be the image of $\psi_c-\id$ and let $J,\mf A$ be such that $(I,J,\mf A)_{G/G_2}$ is a transversality triple.
		We start by giving the upper bound.
		Let $n>0$ be arbitrary.
		Let $g\in G$ such that $\norm{g}_S\leq n$. By \cref{prop:conjToCokernelStrong:lemma}, there exists some $g'\in G$ such that $\norm{g'}_S\leq C_1\norm{g}_S$, such that $[g']_\psi=[g]_\psi$ and such that $g'G_2\in\mf A+J$.
		We can thus bound $$\#\left\{[g]_\psi\big \vert \norm{g}_S\leq n\right\}$$ from above by $$
		\#{\left\{[g]_\psi\big \vert \norm{g}_S\leq C_1n, gG_2\in\mf A+J \right\}}.
		$$
		As we are only interested in $\gr_{\psi,G}(n)$ up to $\simeq$, it thus suffices to bound$$
		\#{\left\{[g]_\psi\big\vert \norm{g}_S\leq n, gG_2\in\mf A+J \right\}}.
		$$
		As $\mf A$ is finite, we can do these estimates independently for each of the elements of $\mf A$.
		Let $a\in A$ be arbitrary. Let $\{v_1,\cdots,v_{d_c-r_c}\}$ and $\{\theta_1,\cdots,\theta_{\mf d_\psi}\}$ be such as in \cref{prop:conjNumberOneForms:lemma}.
		
		We split the set  $$
		X_a(n)=\left\{[g]_\psi\big\vert \norm{g}_S\leq n, gG_2\in\mf a+J \right\}
		$$
		into two.
		Let $$
		X_0(n)=\left\{[g]_\psi\in X_a\Big\vert\exists \lambda_i: gG_2=a+\sum_{i=1}^{d_c-r_c}\lambda_iv_i\text{ with }\theta_i(\lambda_i)=0\text{ for }i\in\range{\mf d_\psi}\right\}.
		$$
		Now let $X_1(n)=X_a(n)\backslash X_0(n)$, in particular there exists for each of these classes some $i\in \range{\mf d_{\psi}}$ such that $\theta_i(\lambda_i)\neq 0$.
		Notice that it might be possible for one of these sets to be empty.
		
		First we prove that there exists some constant $C_2$, independent of $n$, such that $\#{X_0(n)}\leq C_2n^{d_c-r_c-\mf d_\psi+2}$. 
		Notice that as $\theta_i$ are all non-constant, for every $i\in\range{\mf d_{\psi}}$, there exists at most one value $\lambda$, such that $\theta_i(\lambda)=0$.
		There exists some constants $C_3,C_4$, such that if $$(g_1,g_2)\in X_0(n),$$ then $\norm{g_1}_{S_c}\leq C_3n$ and $\abs{g_2}\leq C_4n^2$. And thus, after enlarging $C_3$ if necessary, such that $\lambda_i\leq C_3n+\norm{a}_{S_c}$ for all $i\in [\mf d_\psi+1,d_c-r_c]$. We thus have that $\#{X_0(n)}$ is at most ${C_4n^2(C_3n+\norm{a}_S)^{d_c-d_r-\mf d_\psi}}$. If we now take $C_5=\max(C_4,C_3+\norm{a}_S)$, then we finally obtain an upper bound of $(C_5n)^{d_c-r_c-\mf d_\psi+2}$.
			
		Now we proceed with an upper bound for the size of $X_1(n)$.
		As before, we can find some $C_3$ such that if $(g_1,g_2)=(a+\sum{\lambda_iv_i},g_2)\in X_1(n)$, then $\abs{\lambda_i}\leq C_3n$. By assumption, there exists some $i$, in $\range{\mf d_\psi}$ such that $\theta_i(\lambda_i)\neq 0$ and thus is $\gcd(\theta_1(\lambda_1),\cdots, \theta_{\mf d_\psi}(\lambda_{\mf g_\psi}))\neq 0$. By \cref{prop:conjNumberOneForms:lemma}, we can thus estimate 
		$$\#{X_1(n)}\leq
		\sum_{\lambda_1=-N}^N\sum_{\lambda_2=-N}^N\cdots\sum_{\lambda_{d_c-r_c}=-N}^N \ggd(\theta_1(\lambda_1),\theta_2(\lambda_2),\cdots,\theta_{\mf d_\psi}(\lambda_{\mf d_\psi}))$$
		where $N=C_3n$. By \cref{prop:sumofggd:lemma} this is bounded above by ${C_6N}^{d_c-r_c}d_{\psi}(C_6N)$. for some constant $C_6>0$.
			
		Adding these functions together over all possible choices of $a\in \mf A$, we have that 
		$$\gr_{\psi,G}(n)\prec\sum_{a\in\mf A}\#{X_a(n)}\prec\max(n^{d_c-r_c-\mf d_\psi+2},n^{d_c-r_c}d_{\mf d_\psi}).$$
		Which one of these arguments is larger depends on $\mf d_\psi$.
		If $\mf d_\psi=0$, then the second vanishes and we are left with $n^{d_c-d_r+2}$. In case $\mf d_\psi=1$, Then in both arguments we have $n^{d_c-d_r+1}$. In case $\mf d_\psi\geq2$, then the second term dominates the first and we are left with $n^{d_c-r_c}d_{\mf d_\psi}(n)$. In all these cases, the desired upper bound holds.

		Now we proceed with the lower bound. Let $(g_1,g_2)$,$(g_1',g_2')\in G$ be arbitrary such that $g_1,g_2\in J$. By \cref{prop:conjDifferByI:lemma} it follows that if $(g_1,g_2)\sim_\psi(g_1',g_2')$, then $g_1=g_1'$. 
		First we proceed with the lower bound in case $\mf d_\psi=0$.
		In this case, from combining the previous with \cref{prop:conjNumberOneForms:lemma} the elements of the form$$
		\sum_{(\lambda_i)_i\in\ZZ^{d_c-r_c}}(\lambda_iv_i,g_2)
		$$
		all lie in distinct $\psi$-classes. Let $C_7=\max(\norm{v_i})$. For each $v_i$, pick $w_i\in G$ with $w_iG_2=v_i$ such that $\norm{w_i}=\norm{v_i}$ Pick $s_1,s_2\in S$ such that $[s_1,s_2]$ is non-trivial. Then for $\lambda_i\in\range{n}, \kappa_1,\kappa_2\in \range{n}$, consider the following elements:$$
		\prod_{i=1}^{d_c-r_c}w_i^{\lambda_i}\: [s_1^n,s_2^{\kappa_1}][s_1,s_2^{\kappa_2}].
		$$
		These elements, by construction, all have a word norm of at most $$
		(d_c-r_c)C_7n + 4n+2n+2\leq ((d_c-r_c)C_7+8)n.
		$$
		As all these elements are distinct, then we have $\gr_{\psi,G}^S(((d_c-r_c)C_7+8)n)\geq n^{d_c-r_c+2}$.
		
		Now assume that $\mf d_\psi\geq 1$.
		For $\lambda_i\in [-n,n]$ and for $0\leq g_2<\gcd(\theta_1(\lambda_1),\cdots,\theta_{\mf d_\psi}(\lambda_{\mf d_\psi}))$ with $g_2$ even, we have by \cref{prop:conjNumberOneForms:lemma} that all the elements
		$$
		\prod_{i=1}^{d_c-r_c}w_i^{\lambda_i}\:(0,g_2)
		$$
		lie in distinct $\psi$-conjugacy classes.
		These elements are also all of norm at most $C_7(d_c-r_c)n+n$.
		By \cref{prop:sumofggd:lemma} we thus obtain that $$\gr_{\psi,G}^{S}(((d_c-r_c)C_7+1)n)\succ n^{d_c-r_c}d_{\mf d}(n).$$
		In both cases we thus have the desired lower bound.		
	\end{proof}

	\bibliographystyle{alpha}
	\bibliography{heisenberg}
\end{document}